\theoremstyle{definition}
\newtheorem*{thm*}{Theorem}
\newtheorem{prop}{Proposition}[section]
\newtheorem{lemma}[prop]{Lemma}
\newtheorem{thm}[prop]{Theorem}
\newtheorem{corollary}[prop]{Corollary}
\newtheoremstyle{pourlesremarques}{\topsep}{\topsep}{\normalfont}{}{\bfseries}{.}{ }{}
\theoremstyle{pourlesremarques}
\newtheorem*{rem*}{Remark}
\newtheoremstyle{pourlesexemples}{\topsep}{\topsep}{\normalfont}{}{\bfseries}{.}{ }{}
\theoremstyle{pourlesexemples}
\def\presuper#1#2%
\newcommand{\li}{\mathfrak{l}}
\newcommand{\St}{{\operatorname{St}}}
\newcommand{\cusp}{{\operatorname{cusp}}}
\def\Rep{\operatorname{Rep}}
\def\Irr{\operatorname{Irr}}
\newcommand{\Indec}{\operatorname{Indec}}
\newcommand{\Nilp}{\operatorname{Nilp}}
\newcommand{\Image}{\mathrm{Im}}
\newcommand{\Hom}{\operatorname{Hom}}
\newcommand{\WD}{\mathrm{D}}
\newcommand{\W}{\mathrm{W}}
\newcommand{\G}{\mathcal{G}}
\newcommand{\GL}{\operatorname{GL}}
\newcommand{\SL}{\operatorname{SL}}
\newcommand{\Ker}{\mathrm{Ker}}
\newcommand{\La}{\mathrm{L}}
\newcommand{\V}{\mathrm{V}}
\newcommand{\C}{\mathrm{C}}
\newcommand{\CV}{\mathrm{CV}}
\newcommand{\N}{\mathbb{N}}
\newcommand{\Z}{\mathbb{Z}}
\newcommand{\Fl}{\overline{\mathbb{F}_\ell}}
\def\ss{\operatorname{ss}}
\def\GL{\operatorname{GL}}
\def\\Hom{\operatorname{\Hom}}
\def\Irr{\operatorname{Irr}}
\def\Cyc{\mathcal{C}}
\def\St{\operatorname{St}}
\def\Rep{\operatorname{Rep}}
\def\leq{\leqslant}
\def\geq{\geqslant}
\def\Rep{\operatorname{Rep}}
\def\presuper#1#2%
\DeclareRobustCommand{\rvdots}{%
  \vbox{
    \baselineskip4\p@\lineskiplimit\z@
    \kern-\p@
    \hbox{.}\hbox{.}\hbox{.}  \hbox{.}
  }}
\title{A characterization of the relation between two $\ell$-modular correspondences}
\author{Robert Kurinczuk and Nadir Matringe}
\date{\today}
\begin{document}

\maketitle

\begin{abstract}
Let $F$ be a non archimedean local field of residual characteristic $p$ and $\ell$ a prime number different from $p$. Let 
$\V$ denote Vign\'eras'~$\ell$-modular local Langlands correspondence \cite{Viginv}, between irreducible~$\ell$-modular representations of $\GL_n(F)$ and $n$-dimensional $\ell$-modular Deligne representations of 
the Weil group~$\W_F$.  In \cite{KM18}, enlarging the space of parameters to Deligne representations with non necessarily nilpotent operators, we proposed a modification of the correspondence of Vign\'eras into a 
correspondence $\C$ compatible with the formation of local constants in the generic case. In this note, following a remark of Alberto M\'inguez,
we characterize the modification $\C\circ \V^{-1}$ by a short list of natural properties.
\end{abstract}

\section{Introduction}

Let~$F$ be a non-archimedean local field with finite residue field of cardinality~$q$, a power of a prime~$p$, and 
$\W_F$ the Weil group of~$F$. Let $\ell$ be a prime number different from $p$. The $\ell$-modular local Langlands correspondence established
by Vignéras in \cite{Viginv} is a bijection from isomorphism classes of smooth irreducible representations of $\GL_n(F)$ and $n$-dimensional Deligne representations 
(Section \ref{section Deligne}) of the Weil group $\W_F$ with nilpotent monodromy operator. It is uniquely characterized by a non-naive compatibility 
with the $\ell$-adic local Langlands correspondence (\cite{LRS}, \cite{HT01}, \cite{HenniartLLC}, \cite{Sch}) under reduction modulo $\ell$, involving twists by Zelevinsky involutions. In \cite{KM18}, at the cost of 
having a less direct compatibility with reduction modulo $\ell$, we proposed a modification of the correspondence $\V$ of Vignéras, 
by in particular enlarging its target to the larger space of Deligne representations with non necessarily nilpotent monodromy operator 
(it is a particularity of the $\ell$-modular setting that such operators can live outside the nilpotent world). The modified correspondence 
$\C$ is built to be compatible with local constants on both sides of the corrspondence (\cite{KM17} and \cite{KM18}) and we proved that it is indeed the case 
for generic representations in \cite{KM18}. Here, we show in Section \ref{section characterization} that if we expect a correspondence to have such a property, and some other natural 
properties, then it will be uniquely determined by $\V$. Namely we characterize the map $\C\circ \V^{-1}$ by a list of five 
properties in Theorem \ref{theorem characterization}. The map $\C\circ \V^{-1}$ endows the image of $\mathrm{C}$ with a semiring structure because the image of $\V$ is 
naturally equipped with semiring laws. We end this note by 
studying this structure from a different point of view in Section \ref{section semiring}.

\section{Preliminaries}

Let~$\nu:\W_F\rightarrow \Fl^\times$ be the unique character trivial on the inertia subgroup of~$\W_F$ and sending a geometric Frobenius element to~$q^{-1}$, it corresponds to the normalized 
absolute value $\nu:F^\times\rightarrow \Fl^\times$ via local class field theory. 

We consider only smooth representations of locally compact groups, which unless otherwise stated will be considered on $\Fl$-vector spaces.  For~$\G$ a locally compact topological group, we 
let~$\Irr(\G)$ denote the set of isomorphism classes of irreducible representations of $\G$.

\subsection{Deligne representations}\label{section Deligne}
We follow \cite[Section 4]{KM18}, but slightly simplify some notation.  A \textit{ Deligne-representation} of~$\W_F$ is a pair~$(\Phi,U)$ where~$\Phi$ is a finite dimensional 
semisimple~representation of~$\W_F$, and~$U\in\Hom_{\W_F}(\nu\Phi,\Phi)$; we call~$(\Phi,U)$ 
 \emph{nilpotent} if~$U$ is a nilpotent endomorphism over~$\Fl$.  
 
 The set of morphisms between Deligne representations~$(\Phi,U),(\Phi',U')$ (of~$\W_F$) is given by~$\Hom_{\WD}(\Phi,\Phi')=\{f\in\Hom_{\W_F}(\Phi,\Phi'): f\circ U=U'\circ f\}$. 
 This leads to notions of irreducible and indecomposable Deligne representations.  We refer to \cite[Section 4]{KM18}, for the (standard) definitions of dual and direct sums of Deligne representations.

We let~$\Rep_{\WD,\ss}(\W_F)$ denote the set of isomorphism classes of~Deligne-representations; and $\Indec_{\WD,\ss}(\W_F)$ (resp.~$\Irr_{\WD,\ss}(\W_F)$,~$\Nilp_{\WD,\ss}(\W_F)$) denote
the set of isomorphism classes of indecomposable (resp. irreducible, nilpotent)~Deligne representations.  Thus
\[\Irr_{\WD,\ss}(\W_F)\subset \Indec_{\WD,\ss}(\W_F)\subset \Rep_{\WD,\ss}(\W_F),\qquad \Nilp_{\WD,\ss}(\W_F)\subset \Rep_{\WD,\ss}(\W_F).\]
Let~$\Rep_{\ss}(\W_F)$ denote the set of isomorphism classes of semisimple representations of~$\W_F$, we have a canonical 
map~$\mathrm{Supp}_{\W_F}:\Rep_{\WD,\ss}(\W_F)\rightarrow \Rep_{\ss}(\W_F),\quad~(\Phi,U)\mapsto \Phi$; we call~$\Phi$ the~\emph{$\W_F$-support} of~$(\Phi,U)$.

For $\Psi\in \Irr(\W_F)$ we denote by~$o(\Psi)$ the cardinality of the \emph{irreducible line}~$\Z_{\Psi}=\{\nu^k\Psi, k\in \Z\}$; it divides 
the order of $q$ in $\mathbb{F}_\ell^\times$ hence is prime to~$\ell$. We let 
$\li(\W_F)=\{\Z_\Psi:\Psi\in\Irr(\W_F)\}$.

The fundamental examples of non-nilpotent Deligne representation are the \emph{cycle representations}: let~$I$ be an isomorphism from~$\nu^{o(\Psi)}\Psi$ to~$\Psi$ 
and define~$\Cyc(\Psi,I)=(\Phi(\Psi),C_I)\in \Rep_{\ss}(\WD,\Fl)$ by
\[\Phi(\Psi)=\bigoplus_{k=0}^{o(\Psi)-1} \nu^{k}\Psi,\quad C_I(x_0,\dots,x_{o(\Psi)-1})=( I(x_{o(\Psi)-1}),x_0,\dots,x_{o(\Psi)-2}),~x_k\in\nu^k\Psi.\]
Then $\Cyc(\Psi,I)\in \Irr_{\ss}(\WD, \Fl)$ and its isomorphism class only depends on~$(\Z_\Psi,I)$, by \cite[Proposition 4.18]{KM18}.  

To remove dependence on~$I$, in \cite[Definition 4.6 and Remark 4.9]{KM18} we define an equivalence relation $\sim$ on $\Rep_{\WD,\ss}(\W_F)$.  The equivalence class of~$\Cyc(\Psi,I)$ is independent of $I$, and we set
\[\Cyc(\Z_\Psi):=[\Cyc(\Psi,I)]\in [\Irr_{\WD,\ss}(\W_F)].\]

The sets $\Rep_{\WD,\ss}(\W_F)$, $\Irr_{\WD,\ss}(\W_F)$, $\Indec_{\WD, \ss}(\W_F)$, and $\Nilp_{\WD, \ss}(\W_F)$ are unions of $\sim$-classes, and if~$X$ denotes 
any of them we set~$[X]:=X/\sim$.  Similarly, for~$(\Phi,U)\in\Rep_{\WD,\ss}(\W_F)$ we write~$[\Phi,U]$ for its equivalence class in~$[\Rep_{\WD,\ss}(\W_F)]$. 
On~$\Nilp_{\WD, \ss}(\W_F)$ the equivalence relation~$\sim$ coincides with equality.

The operations $\oplus$ and~$(\Phi,U)\mapsto (\Phi,U)^\vee$ on $\Rep_{\WD,\ss}(\W_F)$ descend to $[\Rep_{\WD,\ss}(\W_F)]$.  Tensor products are more subtle; 
for example, tensor products of semisimple representations of~$\W_F$ are not necessarily semisimple.  We define a semisimple tensor 
product operation~$\otimes_{\ss}$ on $[\Rep_{\WD,\ss}(\W_F)]$ in \cite[Section 4.4]{KM18}, turning $([\Rep_{\WD,\ss}(\W_F)],\oplus,\otimes_{\ss}$) into an abelian semiring.

The basic non-irreducible examples of elements of~$\Nilp_{\WD,\ss}(\W_F)$ are called \emph{segments}:  For $r\geq 1$, set~$[0,r-1]:=(\Phi(r),N(r))$, where
\begin{align*} 
\Phi(r)=\bigoplus_{k=0}^{r-1} \nu^k,\quad N(r)(x_0,\ldots,x_{r-1})&=(0,x_0,\ldots,x_{r-2}),~x_k\in\nu^k.\end{align*}

We now recall the classification of equivalence classes of Deligne representations of~$\W_F$ of \cite{KM18}. 

\begin{thm}[{\cite[Section 4]{KM18}}]\label{theorem classification}
\begin{enumerate} 
\item Let~$\Phi\in \Irr_{\WD,\ss}(\W_F)$, then there is either a unique $\Psi \in  \Irr(\W_F)$ such that $\Phi=\Psi$, or a unique 
irreducible line $\Z_\Psi$ such that $[\Phi]=\Cyc(\Z_\Psi)$.
\item Let~$[\Phi,U]\in[\Indec_{\WD,\ss}(\W_F)]$, then there exist a unique $r\geq 1$ and a unique $\Theta\in [\Irr_{\WD,\ss}(\W_F)]$ such that~$[\Phi,U]=[0,r-1]\otimes_{\ss} \Theta$.
\item Let~$[\Phi,U]\in[\Rep_{\WD,\ss}(\W_F)]$, there there exist~$[\Phi_i,U_i]\in[\Indec_{\WD,\ss}(\W_F)]$ for~$1\leqslant i\leqslant r$ such that~$[\Phi,U]=\bigoplus_{i=1}^r[\Phi_i,U_i]$.
\end{enumerate}
\end{thm}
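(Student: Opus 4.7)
My plan is to handle the three parts in order, working up from irreducibles in (i) through indecomposables in (ii) to arbitrary Deligne representations in (iii). In each case the strategy is to reduce quickly to an analysis of the operator~$U$ on the isotypic decomposition of the~$\W_F$-support along a single irreducible line~$\Z_\Psi$, and then to compare this data against the combinatorial models afforded by segments and cycle representations.

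For part (i), I would first observe that since~$U:\nu\Phi\to\Phi$ is~$\W_F$-equivariant, it carries the~$\nu^{-1}\Psi$-isotypic component of~$\Phi$, twisted by~$\nu$, into the~$\Psi$-isotypic component of~$\Phi$. Consequently the set of irreducible constituents of~$\Phi$ is stable under twisting by~$\nu$, hence is a union of complete~$\nu$-orbits; since the span of any single~$\nu$-orbit of isotypic components is already a Deligne subrepresentation, irreducibility forces~$\Phi$ to be supported on a single irreducible line~$\Z_\Psi$. If~$U=0$ then every~$\W_F$-subrepresentation is automatically a Deligne subrepresentation, which forces~$\Phi=\Psi\in\Irr(\W_F)$. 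If~$U\neq 0$, I would argue that~$\Phi$ must contain each twist~$\nu^k\Psi$ with multiplicity exactly one, with~$U$ acting as a cyclic shift up to scalars; invoking \cite[Proposition 4.18]{KM18} to absorb the choice of intertwiner into the equivalence relation~$\sim$ then yields~$[\Phi]=\Cyc(\Z_\Psi)$. Uniqueness of~$\Psi$ (respectively of~$\Z_\Psi$) is read off the~$\W_F$-support.

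For part (ii), I would take $[\Phi,U]\in[\Indec_{\WD,\ss}(\W_F)]$ and reduce again to the case where the~$\W_F$-support lies on a single irreducible line~$\Z_\Psi$. Writing $\Phi=\bigoplus_{k=0}^{o(\Psi)-1} V_k\otimes \nu^k\Psi$ for multiplicity spaces~$V_k$, the operator~$U$ induces linear maps $V_k\to V_{k+1}$ (indices modulo~$o(\Psi)$). Combining an indecomposability argument with a Jordan normal form analysis of a suitable power of~$U$ then produces a unique $r\geq 1$ and an essentially unique $\Theta\in[\Irr_{\WD,\ss}(\W_F)]$ — either an irreducible~$\W_F$-representation in the nilpotent case, or a cycle representation~$\Cyc(\Z_\Psi)$ in the non-nilpotent case — such that $[\Phi,U]=[0,r-1]\otimes_{\ss}\Theta$. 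The step I expect to require the most care is tracking compatibility with the semisimple tensor product~$\otimes_{\ss}$ and the equivalence relation~$\sim$ rather than with honest isomorphism of Deligne representations; by the construction of~$\sim$ and~$\otimes_{\ss}$ in \cite[Section 4]{KM18}, the two match on equivalence classes, so the representative-level argument descends cleanly.

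For part (iii) I would apply a Krull--Schmidt-type argument in the category of Deligne representations: since every object is finite dimensional over~$\Fl$ and the morphism spaces~$\Hom_{\WD}(-,-)$ are finite dimensional, endomorphism rings of indecomposables are local, and one may peel off indecomposable summands one at a time to obtain the required decomposition $[\Phi,U]=\bigoplus_{i=1}^r[\Phi_i,U_i]$.
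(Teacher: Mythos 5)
The paper does not prove this theorem; it is quoted from \cite[Section 4]{KM18} with no argument supplied here, so there is no in-text proof to compare your proposal against, and I can only judge it on its own terms. Your outline is reasonable and presumably close in spirit to the reference: reducing to a single $\nu$-orbit by observing that the span of the isotypic components along one irreducible line is a Deligne subrepresentation (in fact a Deligne direct summand, since the decomposition by lines produces a complement), then the $U=0$ versus $U\neq 0$ dichotomy for (i), and finite-dimensionality for (iii). Two small remarks. In (i), the intermediate assertion that ``the set of irreducible constituents of $\Phi$ is stable under twisting by $\nu$'' is not literally a consequence of $\W_F$-equivariance of $U$ and is not what you use; the next clause — each $\nu$-orbit of isotypic components spans a Deligne subrepresentation — is both correct and sufficient, so you should drop the imprecise version. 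And (iii) asserts only \emph{existence} of an indecomposable decomposition, so the Krull--Schmidt apparatus (local endomorphism rings of indecomposables) is more than is needed: it suffices that every nonzero finite-dimensional Deligne representation has an indecomposable summand, and induct on dimension.

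The genuine gap is part (ii), which as written is a plan rather than a proof. ``Combining an indecomposability argument with a Jordan normal form analysis of a suitable power of $U$'' names the tool without applying it. To close this you would need to: identify the operator concretely (the $o(\Psi)$-fold round-trip composite $T\colon V_0\to V_0$ of the linear maps induced by $U$ on the multiplicity spaces $V_k$, equivalently $U^{o(\Psi)}$ viewed as a Deligne endomorphism of $(\Phi,U)$); show that the generalized eigenspace decomposition of $T$ propagates compatibly to every $V_k$ and hence refines $(\Phi,U)$ into a direct sum of Deligne subrepresentations; conclude from indecomposability that a single Jordan block for a single eigenvalue $\lambda$ survives; and then separate the nilpotent case $\lambda=0$, which gives $[0,r-1]\otimes_{\ss}\Psi$, from the invertible case $\lambda\neq 0$, which gives $[0,r-1]\otimes_{\ss}\Cyc(\Z_\Psi)$ only after invoking the relation $\sim$ (via \cite[Proposition 4.18]{KM18}) to discard the choice of scalar and intertwiner. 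You would also need to establish uniqueness of the pair $(r,\Theta)$, which is asserted in the statement but not addressed in your sketch. That is where the actual content of (ii) lies, and none of it is on the page.
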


We recall the following classical result about tensor products of segments.

\begin{lemma}\label{lemma tensor product decomposition}
For $n\geq m\geq 1$], one has 
\[[0,n-1]\otimes_{\ss} [0,m-1]=[0,n+m-2]\oplus [1,n+m-3] \oplus \dots \oplus [m-1,n-1].\] 
\end{lemma}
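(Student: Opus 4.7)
The plan is to reduce the computation to the classical Clebsch--Gordan decomposition and then match the $\W_F$-supports. Since the underlying $\W_F$-representations of both segments are semisimple (direct sums of the characters $\nu^k$), their Weil-theoretic tensor product $\bigoplus_{0 \leq i \leq n-1,\, 0 \leq j \leq m-1} \nu^{i+j}$ is again semisimple. In this setting, $\otimes_{\ss}$ coincides with the ordinary Deligne-representation tensor product, so the monodromy on $[0,n-1] \otimes_{\ss} [0,m-1]$ is $N(n) \otimes \Id_{\Phi(m)} + \Id_{\Phi(n)} \otimes N(m)$.

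Having made this reduction, I would next invoke the Clebsch--Gordan decomposition of two Jordan blocks. Writing segments as symmetric powers $[0,r-1] = \Sym^{r-1}([0,1])$ of the standard two-dimensional segment, one obtains $\Sym^{n-1} \otimes \Sym^{m-1} = \bigoplus_{i=0}^{m-1} \Sym^{n+m-2-2i}$, which at the level of nilpotent operators produces Jordan blocks of sizes $n+m-1, n+m-3, \ldots, n-m+1$.

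The last step is to equip each Jordan block with its $\W_F$-support and identify it as a segment. A direct count shows that the multiplicity of $\nu^k$ in $\Phi(n) \otimes \Phi(m)$ equals $\#\{(i,j) : i+j = k,\ 0 \leq i \leq n-1,\ 0 \leq j \leq m-1\}$, and a symmetric count shows this equals the multiplicity of $\nu^k$ in $\bigoplus_{i=0}^{m-1} [i,\,n+m-2-i]$. Combined with the fact that the lowest weight of the $i$-th Jordan block is pinned to $\nu^i$ by the kernel filtration of $N$ on the tensor product, one obtains that each summand, viewed with its residual $\W_F$-action, is forced to be the segment $[i,\,n+m-2-i]$.

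The main obstacle is the characteristic~$\ell$ subtlety on the monodromy side: the bare Clebsch--Gordan decomposition of $J_n \otimes J_m$ can fail for small $\ell$ (for instance $J_2\otimes J_2 = J_2 \oplus J_2$ rather than $J_3 \oplus J_1$ in characteristic~$2$). Thus one must either rely on the fact that the definition of $\otimes_{\ss}$ in \cite[Section 4.4]{KM18} is tailored so that segment tensor products behave as in characteristic zero, or explicitly lift the monodromy to a Jordan block over a characteristic-zero coefficient ring, decompose there, and then reduce. Once this point is settled, the remaining combinatorics of multiplicities and weights is entirely routine.
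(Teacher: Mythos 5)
Your last paragraph correctly identifies both the subtlety and the intended resolution, and it matches the paper's proof: segments are reductions of integral $\ell$-adic segments, the tensor product is computed over characteristic zero via the classical Clebsch--Gordan/$\SL_2(\mathbb{C})$ highest-weight decomposition, and the stated formula follows by reducing modulo $\ell$. So your ``second option'' at the end is precisely the argument the authors give.

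The problem is that the first three paragraphs are inconsistent with that conclusion. In the first paragraph you assert that, because the underlying $\W_F$-modules are already semisimple, $\otimes_{\ss}$ coincides with the plain Deligne-representation tensor product, so the monodromy on $[0,n-1]\otimes_{\ss}[0,m-1]$ is $N(n)\otimes \Id + \Id\otimes N(m)$ computed directly over $\Fl$. If that were the definition, the lemma would be \emph{false}: your own counterexample $J_2\otimes J_2 = J_2 \oplus J_2$ in characteristic $2$ shows that the Jordan-block decomposition, and hence the decomposition into segments, would not match the stated right-hand side. Since the lemma is asserted to hold for all $\ell$, the premise of your first paragraph must be dropped; $\otimes_{\ss}$ on segments is, by construction, the reduction of the characteristic-zero tensor, not the naive tensor over $\Fl$. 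Once you adopt that, the multiplicity-matching and weight-pinning in your second and third paragraphs become redundant: the $\ell$-adic Clebsch--Gordan decomposition, graded by powers of $\nu$, already hands you the segments $[i,\,n+m-2-i]$ directly, and reduction modulo $\ell$ preserves the direct sum. In short, the proof should begin where your fourth paragraph ends, and the first paragraph as written needs to be retracted rather than merely caveated.
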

\begin{proof}
In the $\ell$-adic setting the corresponding statement can be translated into a statement on tensor product of irreducible representations of $\SL_2(\mathbb{C})$, which is well-known 
and easily checked by the highest weight theory. The 
corresponding $\ell$-adic representations are integral and~$\otimes_{\ss}$ coincides with the standard tensor product on segments, 
the statement follows from reduction modulo $\ell$ of the~$\ell$-adic result.\end{proof}

\subsection{$\La$-factors}

We set $\Irr_{\cusp}(\GL(F)):=\coprod_{n\geq 0} \Irr_{\cusp}(\GL_n(F))$ where $\Irr_{\cusp}(\GL_n(F))$ is the set of isomorphism classes of irreducible cuspidal representations of $\GL_n(F)$.

Let $\pi$ and $\pi'$ be a pair of cuspidal representations of $\GL_n(F)$ and $\GL_m(F)$ respectively. We denote by $\La(X,\pi,\pi')$ the Euler factor attached to this pair 
in \cite{KM17} via the Rankin-Selberg method. 
We recall that a cuspidal representation of $\GL_n(F)$ is called \textit{banal} if $\nu\otimes \pi\not\simeq \pi$. The following is a part of~\cite[Theorem 4.9]{KM17}.

\begin{prop}\label{theorem L functions}
Let $\pi,\pi'\in\Irr_{\cusp}(\GL(F))$. If $\pi$ or $\pi'$ is non-banal, then $\La(X,\pi,\pi')=1$. 
\end{prop}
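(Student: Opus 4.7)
The plan is to show that, under the hypothesis that $\pi$ is non-banal (the case of $\pi'$ being non-banal is symmetric), every Rankin--Selberg zeta integral attached to $(\pi,\pi')$ in \cite{KM17} is a Laurent polynomial in $X$. Since $L(X,\pi,\pi')$ is defined there as a generator of the fractional $\Fl[X,X^{-1}]$-ideal produced by these integrals, this immediately yields $L(X,\pi,\pi') = 1$.

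Assume $\pi\in\Irr_{\cusp}(\GL_n(F))$ is non-banal, i.e.\ $\nu\otimes\pi\simeq\pi$, and $\pi'\in\Irr_{\cusp}(\GL_m(F))$. The case $n\neq m$ needs no banality hypothesis: cuspidality of the smaller-rank representation forces the restrictions of its Whittaker functions to the diagonal torus of $\GL_{\min(n,m)}(F)$ to have compact support modulo the centre, so via the Iwasawa decomposition the zeta integral reduces to a finite sum, which is trivially a Laurent polynomial.

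The substantive case is $n=m$, where I would unfold
\[ Z(X,W,W',\Phi) = \int_{N_n\backslash\GL_n(F)} W(g)\,W'(g)\,\Phi(e_n g)\,|\det g|^s\,dg,\qquad X = q^{-s},\]
via the Iwasawa decomposition. Using cuspidality to bound the Whittaker-support on the diagonal torus modulo the centre and isolating the integration over the centre $Z_n\simeq F^\times$, the problem reduces (up to a Laurent polynomial factor in $X$) to showing that a Tate-type integral
\[ \int_{F^\times} \Phi_0(z)\,\omega_{\pi\pi'}(z)\,|z|^{ns}\,d^\times z\]
is a Laurent polynomial in $X$, for some Schwartz $\Phi_0$ on $F$. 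At this point the non-banal hypothesis enters: $\nu\otimes\pi\simeq\pi$ forces $\omega_\pi = \omega_\pi\cdot(\nu\circ\det)|_{Z_n}$, and since $(\nu\circ\det)(zI_n) = \nu(z)^n$, this means that $\nu^n$ is trivial on $F^\times$, i.e.\ $q^n = 1$ in $\Fl^\times$, equivalently $o\mid n$ where $o$ is the order of $q$ in $\Fl^\times$.

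The main obstacle is to promote this divisibility to Laurent-polynomiality of the central Tate integral. I expect this to require the classification of non-banal cuspidals: they arise as $\ell$-modular ``super-Speh''-type objects built over a banal cuspidal of $\GL_{n/o}(F)$, and this structure forces strong ramification of $\omega_\pi$ on $\oo^\times$, killing all but finitely many terms in the formal series expansion of the above Tate integral. Tracking the explicit computation of \cite[Theorem 4.9]{KM17} should then turn the outline into a complete proof.
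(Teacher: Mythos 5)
First, a point about the reference statement: the paper does not prove this proposition. It is stated as ``a part of \cite[Theorem 4.9]{KM17}'' and is simply quoted. There is therefore no in-paper proof to compare your sketch against; I can only assess the sketch on its own merits.

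Your outline has a genuine gap at the crucial step, and in fact the strategy, as written, cannot work. The only case where anything needs proving is $n=m$ with $\pi'\simeq\nu^{a}\pi^{\vee}$ for some $a$: when $n\neq m$, or when $\pi'$ is not an unramified twist of $\pi^{\vee}$, one gets $\La(X,\pi,\pi')=1$ for \emph{any} cuspidal pair exactly as in the complex case, with no appeal to banality. But in the remaining case, $\omega_{\pi}\omega_{\pi'}=\nu^{an}$ is an \emph{unramified} character, so the central Tate integral you isolate, $\int_{F^{\times}}\Phi_{0}(z)\,\omega_{\pi}\omega_{\pi'}(z)\,|z|^{ns}\,d^{\times}z$, does have a genuine pole (at $\nu^{an}(\varpi)X^{n}=1$) whenever $\Phi_{0}(0)\neq 0$; it is not a Laurent polynomial. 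Your proposed fix, that non-banality ``forces strong ramification of $\omega_{\pi}$ on $\oo^{\times}$,'' is both false and beside the point: when $q\equiv 1\pmod{\ell}$ every cuspidal is non-banal, including depth-zero cuspidals of $\GL_{n}$ with central character trivial on $\oo^{\times}$; and in any case it is $\omega_{\pi}\omega_{\pi'}$, not $\omega_{\pi}$, that governs the Tate integral, and in the dual-twist case this product is always unramified.

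There is also a structural objection. Your reduction via the Iwasawa decomposition — using cuspidality of $\pi$ to get compact support of the Whittaker functions on the non-central torus and thereby isolating a finite linear combination of (Laurent polynomial in $X$)\,$\times$\,(central Tate integral) — uses only cuspidality, never non-banality. But the Tate integrals appearing have poles for banal cuspidal $\pi$ as well, and for banal $\pi$ the Rankin--Selberg $\La$-factor $\La(X,\pi,\pi^{\vee})$ is genuinely nontrivial. So if the mechanism you describe sufficed, it would prove too much. The banal/non-banal dichotomy must enter through a finer analysis of the finite sum over the derived torus and the $K$-orbit decomposition — i.e., through cancellation of the Tate poles across the terms of the sum, not their individual disappearance. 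Finally, the closing appeal to ``tracking the explicit computation of \cite[Theorem 4.9]{KM17}'' is circular, since the statement you are asked to prove is precisely part of that theorem.
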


%From now on we will often write $\Phi$ for an element $[\Phi,U]$ in $[\Rep_{\WD,\ss}(\W_F)]$. 
%
Let $[\Phi,U]\in [\Rep_{\WD,\ss}(\W_F)]$, for brevity from now on we often denote such a class just by~$\Phi$, we denote by 
$\La(X,\Phi)$ the $\La$-factor attached to it in 
\cite[Section 5]{KM18}, their most basic property is that \[\La(X,\Phi\oplus \Phi')= \La(X,\Phi)\La(X,\Phi')\] for 
$\Phi$ and $\Phi'$ in $[\Rep_{\WD,\ss}(\W_F)]$. We need the following property of such factors.

\begin{lemma}\label{lemma pole at zero}
Let $\Psi\in \Irr(\W_F)$ and $a\leq b$ be integers, put~$\Phi=[a,b]\otimes_{\ss} \Psi$ and~$\Phi'=[-b,-a]\otimes_{\ss}\Psi^\vee$, then $\La(X,\Phi\otimes_{\ss}\Phi')$ has a pole at $X=0$.
\end{lemma}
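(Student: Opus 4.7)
The plan is to show that $\Phi \otimes_{\ss} \Phi'$ contains the trivial Deligne representation $[0,0]=(\trivchar,0)$ as a direct summand, and then conclude by additivity of the $\La$-factor together with the known pole of $\La(X, [0, 0])$ at $X = 0$.

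First I would unfold using the associativity and commutativity of $\otimes_{\ss}$ on $[\Rep_{\WD,\ss}(\W_F)]$ recalled in \cite[Section 4.4]{KM18}:
\[\Phi \otimes_{\ss} \Phi' = ([a, b] \otimes_{\ss} [-b, -a]) \otimes_{\ss} (\Psi \otimes_{\ss} \Psi^\vee).\]
Writing $[a, b] = \nu^a \otimes_{\ss} [0, b-a]$ and $[-b, -a] = \nu^{-b} \otimes_{\ss} [0, b-a]$, and applying Lemma \ref{lemma tensor product decomposition} with $n = m = b - a + 1$, I obtain
\[[a, b] \otimes_{\ss} [-b, -a] = \nu^{a-b} \otimes_{\ss} \bigoplus_{k=0}^{b-a} [k, 2(b-a) - k] = \bigoplus_{k=0}^{b-a} [k + a - b,\ b - a - k],\]
and the summand indexed by $k = b-a$ is precisely $[0, 0]$.

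Next I would observe that, viewed as a Deligne representation with zero monodromy, $\Psi \otimes_{\ss} \Psi^\vee$ contains $[0, 0]$ as a direct summand: since $\Fl$ is algebraically closed and $\Psi$ is irreducible, Schur's lemma provides a one-dimensional space of $\W_F$-invariants in $\End(\Psi) \simeq \Psi \otimes \Psi^\vee$, which contributes a trivial summand to the semisimplification. Using $[0, 0] \otimes_{\ss} [0, 0] = [0, 0]$ (Lemma \ref{lemma tensor product decomposition} with $n = m = 1$), I conclude that $[0, 0]$ is a direct summand of $\Phi \otimes_{\ss} \Phi'$.

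Finally, by the additivity of $\La$-factors, $\La(X, \Phi \otimes_{\ss} \Phi') = \La(X, [0, 0]) \cdot \La(X, \Omega)$ for the complementary Deligne representation $\Omega$. The pole of $\La(X, [0, 0])$ at $X = 0$ follows directly from the definition of the $\La$-factor of the trivial Deligne representation in \cite[Section 5]{KM18}, and it cannot be cancelled since $\La$-factors are inverses of polynomials and hence have no zeros. The main obstacle is thus the last, purely definitional step: verifying that the normalization of \cite{KM18} does produce a pole at $X=0$ for $[0,0]$; the rest of the argument is a bookkeeping application of Lemma \ref{lemma tensor product decomposition} and Schur's lemma.
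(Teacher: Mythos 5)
Your proof is correct, and it follows a route that is close in spirit to the paper's but differs in the reduction step. The paper invokes \cite[Lemma~5.7]{KM18} as a black box to reduce the claim to showing that $\La(X,\Psi\otimes_{\ss}\Psi^\vee)$ has a pole at $X=0$, and then finishes exactly as you do, via the trivial $\W_F$-fixed vector in $\Psi\otimes_{\ss}\Psi^\vee$. You instead carry out the reduction by hand: you peel off unramified twists to write $[a,b]\otimes_{\ss}[-b,-a]=\bigoplus_{k=0}^{b-a}[k+a-b,\,b-a-k]$, using Lemma~\ref{lemma tensor product decomposition}, identify the $k=b-a$ summand as $[0,0]$, combine this with the trivial summand of $\Psi\otimes_{\ss}\Psi^\vee$ coming from Schur's lemma, and then use multiplicativity of $\La$ over $\oplus$ together with the fact that these $\La$-factors (being reciprocals of polynomials) have no zeros to cancel a pole. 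This last point is a useful addition that the paper leaves implicit; it is what makes your ``factor through $[0,0]$'' strategy airtight. The trade-off is that the paper's argument is shorter once \cite[Lemma~5.7]{KM18} is granted, whereas yours stays entirely within the tools developed in this note (Lemma~\ref{lemma tensor product decomposition} and the additivity of $\La$), which is a genuine gain in self-containment. Both ultimately hinge on the same definitional fact, which you correctly flag, that the normalization in \cite[Section~5]{KM18} gives $\La(X,[0,0])=(1-X)^{-1}$ (or a unit multiple), hence a pole at $X=0$.
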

\begin{proof}
According to \cite[Lemma 5.7]{KM18}, it is sufficient to prove that $\La(X,\Psi\otimes_{\ss}\Psi^\vee)$ has a pole at $X=0$ for $\Psi\in \Irr(\W_F)$, 
but this property follows from the definition of the $\La$-factor in question, and the fact that $\Psi\otimes_{\ss} \Psi^\vee$ contains a nonzero vector fixed by $\W_F$.
\end{proof}

\subsection{The map $\CV$}\label{subsection CV}
For $\Psi\in \Irr(\W_F)$ we set~$\St_0(\Z_\Psi)=\bigoplus_{k=0}^{o(\Psi)-1} \nu^k\Psi$.  By Theorem \ref{theorem classification}, an element $\Phi\in \Nilp_{\WD,\ss}(\W_F)$ has a unique decomposition
 \[\Phi=\Phi_{\text{acyc}}\oplus\bigoplus_{k\geq 1, \Z_\Psi\in \li(\W_F)} [0,k-1]\otimes_{\ss} n_{\Z_\Psi,k} \St_0(\Z_{\Psi}),\] 
where for all $k\geq1$ and~$\Z_\Psi\in\li(\W_F)$, $\Phi_{\text{acyc}}$ has no summand isomorphic to~$[0,k-1]\otimes_{\ss} \St_0(\Z_{\Psi})$; 
i.e. we have separated~$\Phi$ into an acyclic and a cyclic part.  Then following \cite[Section 6.3]{KM18}, we set:
\[\CV(\Phi)=\Phi_{\text{acyc}}\oplus\bigoplus_{k\geq 1, \Z_\Psi\in \li(\W_F)} [0,k-1]\otimes_{\ss} 
 n_{\Z_\Psi,k} \Cyc(\Z_{\Psi}).\] 
We denote by $\C_{\WD,\ss}(\W_F)$ the image of $\CV:\Nilp_{\WD,\ss}(\W_F)\rightarrow [\Rep_{\WD,\ss}(\W_F)]$, and call $\C_{\WD,\ss}(\W_F)$ the set of \textit{$\C$-parameters}. 

\subsection{$\ell$-modular local Langlands}
We let $\Irr(\GL(F))=\coprod_{n\geq 0} \Irr(\GL_n(F))$ where $\Irr(\GL_n(F))$ denotes the set of 
isomorphism classes of irreducible representations of $\GL_n(F)$. 

In \cite{Viginv}, Vign\'eras introduces the~$\ell$-modular local Langlands correspondence: a bijection 
\[\V:\Irr(\GL(F)) \rightarrow \Nilp_{\WD,\ss},\] 
characterized in a non-naive way by reduction modulo~$\ell$. For this note, we recall~$\mathrm{Supp}_{\W_F}\circ\V$, the \emph{semisimple $\ell$-modular local Langlands correspondence} of 
Vign\'eras, induces a bijection between supercuspidal supports elements of~$\Irr(\GL_n(F))$ and~$\Rep_{\ss}(\W_F)$ compatible with reduction modulo~$\ell$.

In \cite{KM18}, we introduced the bijection  
\[\C=\CV\circ \V:\Irr(\GL(F))\rightarrow \C_{\WD,\ss}(\W_F);\]
which satisfies~$\mathrm{Supp}_{\W_F}\circ\V=\mathrm{Supp}_{\W_F}\circ\C$.  Moreover, the correspondence~$\C$ is compatible with the formation of~$\La$-factors for generic representations, 
a property~$\V$ does not share; in the cuspidal case:

\begin{prop}[{\cite[Proposition 6.13]{KM18})}]
For $\pi$  and $\pi'$ in $\Irr_{\cusp}(\GL(F))$ one has $\La(X,\pi,\pi')=\La(X,\C(\pi),\C(\pi'))$.   
\end{prop}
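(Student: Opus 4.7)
The plan is to proceed by case analysis on the banality of the pair $(\pi,\pi')$, using Proposition~\ref{theorem L functions} to handle the non-banal case and exploiting the acyclic/cyclic dichotomy of Section~\ref{subsection CV} on the Weil--Deligne side. Throughout, I use that for a cuspidal $\pi\in\Irr_{\cusp}(\GL_n(F))$, $\V(\pi)$ is an indecomposable nilpotent Deligne representation, so by Theorem~\ref{theorem classification}(ii) it takes the form $\V(\pi)=[0,r-1]\otimes_{\ss}\Psi$ for unique $r\geq 1$ and $\Psi\in\Irr(\W_F)$, with $o(\Psi)=1$ forced when $r\geq 2$ (cuspidal non-supercuspidals exist in the modular setting only in this case).

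\textbf{Banal case.} Suppose $\pi,\pi'$ are both banal, so that $r=r'=1$ and $\V(\pi)=\Psi$ with $o(\Psi)\geq 2$ (similarly $\Psi'$). For $o(\Psi)\geq 2$, the $\W_F$-representation $\St_0(\Z_\Psi)$ has dimension $o(\Psi)\dim\Psi>\dim\Psi$, so no cyclic constituent $[0,k-1]\otimes_{\ss}\St_0(\Z_{\Psi'})$ can equal the irreducible $\Psi$; hence $\V(\pi)$ is acyclic and $\C(\pi)=\CV(\V(\pi))=\V(\pi)$. Similarly $\C(\pi')=\V(\pi')$, and it remains to show $\La(X,\pi,\pi')=\La(X,\V(\pi),\V(\pi'))$ for banal supercuspidals. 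This is the standard banal compatibility of Vign\'eras's semisimple LLC with Rankin--Selberg $L$-factors: lifting $\pi,\pi'$ to $\ell$-adic cuspidals $\tilde\pi,\tilde\pi'$, the $\ell$-adic LLC gives equality of the $\ell$-adic $L$-factors, and banality guarantees both sides are integral with reduction modulo $\ell$ commuting with $L$-factor formation, using the results of \cite{KM17}.

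\textbf{Non-banal case.} If at least one of $\pi,\pi'$ is non-banal, Proposition~\ref{theorem L functions} yields $\La(X,\pi,\pi')=1$, and the task reduces to showing $\La(X,\C(\pi),\C(\pi'))=1$. A non-banal cuspidal $\pi$ has $o(\Psi)=1$, so $\St_0(\Z_\Psi)=\Psi$ and $\V(\pi)=[0,r-1]\otimes_{\ss}\Psi=[0,r-1]\otimes_{\ss}\St_0(\Z_\Psi)$ lies entirely in the cyclic part. Applying $\CV$ gives
\[\C(\pi)=[0,r-1]\otimes_{\ss}\Cyc(\Z_\Psi),\]
with $\Cyc(\Z_\Psi)=(\Psi,I)$ carrying an \emph{invertible} (non-nilpotent) monodromy $I$. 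The desired triviality should then follow from the definition of $\La$ in \cite[Section~5]{KM18}: the invertible monodromy on the $\Cyc(\Z_\Psi)$ component propagates to $\C(\pi)\otimes_{\ss}\C(\pi')$ and kills the kernel-type quantities governing the $L$-factor.

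\textbf{Main obstacle.} The hardest step is the non-banal case, which relies on the detailed definition of $\La(X,\Phi)$ for Deligne representations with non-nilpotent monodromy from \cite[Section~5]{KM18}---only partially visible in the excerpt through Lemma~\ref{lemma pole at zero} and multiplicativity on direct sums. The concrete task is to verify that the cyclic structure of $\C(\pi),\C(\pi')$ precludes the matched segment-pairing $[a,b]\otimes\Psi,\ [-b,-a]\otimes\Psi^\vee$ of Lemma~\ref{lemma pole at zero} that would yield a pole, and that no analogous non-nilpotent mechanism can render $\La$ non-trivial. The banal case is comparatively routine once the integrality and reduction-compatibility results of \cite{KM17} are applied.
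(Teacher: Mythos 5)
This proposition is cited from \cite[Proposition 6.13]{KM18} without a proof in the present paper, so there is no in-paper argument to match yours against; I can only assess the proposal on its own terms.

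Your high-level plan (split into banal and non-banal cases; use Proposition~\ref{theorem L functions} to get $\La(X,\pi,\pi')=1$ in the non-banal case; invoke banal compatibility of the correspondence with $L$-factors via reduction mod~$\ell$ in the banal case) is the sensible structure. However, you misdescribe the Vign\'eras parameters of cuspidal representations in a way that affects your computation. You assert that $\V(\pi)$ is indecomposable and of the form $[0,r-1]\otimes_{\ss}\Psi$, and that non-banality forces $o(\Psi)=1$. This contradicts Proposition~\ref{proposition non banal cusp}: a non-banal cuspidal has $\V(\pi)=\ell^k\St_0(\Z_\Psi)$, which is an $\ell^k$-fold multiplicity of the semisimple $\W_F$-representation $\St_0(\Z_\Psi)=\bigoplus_{i=0}^{o(\Psi)-1}\nu^i\Psi$ with \emph{zero} Deligne operator, and $o(\Psi)$ can be arbitrary. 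In particular $\V(\pi)$ is typically decomposable, and it is never a segment $[0,r-1]\otimes_{\ss}\Psi$ with $r\geq 2$ (that would have nonzero nilpotent monodromy, which cuspidal parameters do not). Consequently $\C(\pi)=\ell^k\Cyc(\Z_\Psi)$, a direct sum, not $[0,r-1]\otimes_{\ss}\Cyc(\Z_\Psi)$.

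The deeper gap is the one you already flag: the entire non-banal case rests on showing $\La(X,\C(\pi),\C(\pi'))=1$, and your argument for this is the heuristic that an invertible Deligne operator ``kills'' the $L$-factor. This is the right intuition — and indeed the proof of Proposition~\ref{proposition iso 2} in this paper confirms that tensor products such as $\Cyc(\Z_{\Psi_0})\otimes_{\ss}\Psi_1$ and $\Cyc(\Z_{\Psi_0})\otimes_{\ss}\Cyc(\Z_{\Psi_1})$ have bijective Deligne operators — but turning that heuristic into a proof requires the explicit definition of $\La$ for non-nilpotent Deligne representations from \cite[Section~5]{KM18} and an explicit description of $\otimes_{\ss}$ on cycles, neither of which is available in the excerpt and neither of which you supply. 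So the proposal remains an outline with a correctly identified but unfilled central step, compounded by an incorrect parametrization of cuspidal representations.
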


We note another characterization of non-banal cuspidal representations: 

\begin{prop}[{\cite[Sections 3.2 and 6.2]{KM18}}]\label{proposition non banal cusp}
A representation $\pi\in \Irr_{\cusp}(\GL(F))$ is non-banal if and only if $\V(\pi)=\ell^k\St_0(\Z_\Psi)$, or equivalently $\C(\pi)=\ell^k\Cyc(\Z_\Psi)$, for some $k\geq 0$ and $\Psi\in \Irr(\W_F)$.
\end{prop}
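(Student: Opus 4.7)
The plan is to dispatch the two equivalences in the statement separately.

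The equivalence between the description in terms of $\V(\pi)$ and the one in terms of $\C(\pi)$ is an immediate consequence of the construction of~$\CV$ in Section~\ref{subsection CV}: since~$\CV$ is additive with respect to~$\oplus$ and by definition sends a summand $\St_0(\Z_\Psi)$ to $\Cyc(\Z_\Psi)$, applying~$\CV$ to $\ell^k\St_0(\Z_\Psi)$ yields $\ell^k\Cyc(\Z_\Psi)$. Hence $\V(\pi)=\ell^k\St_0(\Z_\Psi)$ if and only if $\C(\pi)=\CV(\V(\pi))=\ell^k\Cyc(\Z_\Psi)$.

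For the main equivalence, the strategy is to combine two inputs. The first is a compatibility property of~$\V$: twisting on the $\GL_n(F)$ side by the absolute value character corresponds to twisting by~$\nu$ on the Deligne side, so $\pi$ is non-banal, i.e.\ $\nu\pi\simeq\pi$, if and only if $\nu\V(\pi)\simeq\V(\pi)$ in~$\Nilp_{\WD,\ss}(\W_F)$. The second is the explicit description of $\V$ on cuspidals: for $\pi\in\Irr_{\cusp}(\GL_n(F))$, the parameter $\V(\pi)$ has trivial monodromy, and the classification of $\ell$-modular cuspidals combined with Vign\'eras' construction shows that either $\V(\pi)=\Psi$ for some $\Psi\in\Irr(\W_F)$ with $o(\Psi)\geq 2$ (the banal case), or $\V(\pi)=\ell^k\St_0(\Z_\Psi)$ for some $\Psi\in\Irr(\W_F)$ and some $k\geq 0$ (the non-banal case, which includes the degenerate subcase $k=0$, $o(\Psi)=1$, where $\St_0(\Z_\Psi)=\Psi$). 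Given this dichotomy, checking the $\nu$-stability test in each case is immediate: $\Psi\not\simeq\nu\Psi$ exactly when $o(\Psi)\geq 2$, whereas $\nu\St_0(\Z_\Psi)\simeq\St_0(\Z_\Psi)$ always holds since the twist by~$\nu$ merely permutes the summands $\nu^j\Psi$ cyclically modulo~$o(\Psi)$.

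The main obstacle, and the reason the proposition is attributed to~\cite[Sections 3.2 and 6.2]{KM18}, is the classification of cuspidal Deligne parameters: proving that the parameter attached to an irreducible cuspidal representation must be of one of the two listed forms relies on Vign\'eras' classification of $\ell$-modular cuspidals of $\GL_n(F)$ together with her characterization of~$\V$. Granting this input, the remainder of the argument is a formal verification from the definitions of Section~\ref{subsection CV}.
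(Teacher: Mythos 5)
The paper does not prove this proposition itself: it is stated as a citation of \cite[Sections 3.2 and 6.2]{KM18}, so there is no in-paper argument to compare against. Your sketch is a reasonable reconstruction of what those sections contain: the essential content is the classification of the Deligne parameters that $\V$ assigns to cuspidal representations (which you correctly attribute to the cited source), and the rest is the formal $\nu$-twist verification you carry out.

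One inaccuracy is worth flagging. You justify the equivalence $\V(\pi)=\ell^k\St_0(\Z_\Psi)\iff\C(\pi)=\ell^k\Cyc(\Z_\Psi)$ by asserting that $\CV$ is ``additive with respect to $\oplus$''. It is not; the paper itself points this out just before Theorem~\ref{theorem characterization}, since a direct sum of $\nu$-twists $\bigoplus_{k=0}^{o(\Psi)-1}\nu^k\Psi$ is sent by $\CV$ to $\Cyc(\Z_\Psi)$, not to the direct sum of the images of the individual $\nu^k\Psi$ (each of which is fixed by $\CV$ when $o(\Psi)>1$). The correct justification for your conclusion is simply the definition of $\CV$ given in Section~\ref{subsection CV}: the Deligne representation $\ell^k\St_0(\Z_\Psi)$ has acyclic part zero and cyclic multiplicity $n_{\Z_\Psi,1}=\ell^k$, so by definition $\CV(\ell^k\St_0(\Z_\Psi))=\ell^k\Cyc(\Z_\Psi)$; injectivity of $\CV$ (it is a bijection onto $\C_{\WD,\ss}(\W_F)$, since $\C=\CV\circ\V$ is) gives the converse. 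Your conclusion is therefore right, but not for the reason you state, and the misstated reason contradicts a point the paper expressly makes.
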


Amongst non-banal cuspidal representations, those for which $k=0$ in the above statement, shall play a special role in our characterization.  We denote by $\Irr_{\cusp}^\star(\GL(F))$ the subset of $\Irr_{\cusp}(\GL(F))$ consisting of those~$\pi\in\Irr_{\cusp}(\GL(F))$ such that~$\C(\pi)=\Cyc(\Z_\Psi)$, for some~$\Psi\in \Irr(\W_F)$

\section{The characterization}\label{section characterization}
In this section, we provide a list of natural properties which characterize~$\CV:\Nilp_{\WD,\ss}(\W_F)\rightarrow [\Rep_{\WD,\ss}(\W_F)]$. 

\begin{prop}\label{prop observation 1}
Let $\CV':\Nilp_{\WD,\ss}(\W_F)\rightarrow [\Rep_{\WD,\ss}(\W_F)]$ be any map, and~$\C':=\CV'\circ \V$.  Suppose
\begin{enumerate}[(1)]
\item \label{a} $ \mathrm{Supp}_{\W_F}\circ \C'$ is the semisimple $\ell$-modular local Langlands correspondence of Vign\'eras; in other words,~$\CV'$~preserves the $\W_F$-support;
\item \label{b} $\C'$ (or equivalently~$\CV'$) commutes with taking duals;
\item \label{c}  $\La(X,\pi,\pi^\vee)=\La(X,\C'(\pi),\C'(\pi)^\vee)$ for all non-banal representations~$\pi \in \Irr_{\cusp}^*(\GL(F))$. \end{enumerate}
Then for all $\Psi\in \Irr(\W_F)$, one has $\CV'(\St_0(\Z_\Psi))=\Cyc(\Z_\Psi)$.
\end{prop}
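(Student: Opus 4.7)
The plan is, for each $\Psi\in\Irr(\W_F)$, to produce a non-banal cuspidal $\pi\in\Irr_{\cusp}^*(\GL(F))$ with $\V(\pi)=\St_0(\Z_\Psi)$ (equivalently $\C(\pi)=\Cyc(\Z_\Psi)$; such $\pi$ exists by Proposition \ref{proposition non banal cusp} together with the very definition of $\Irr_{\cusp}^*$), and then to pin down $\Phi':=\CV'(\St_0(\Z_\Psi))=\C'(\pi)$ using the tension between a support constraint coming from hypothesis (1) and an $\La$-factor constraint coming from hypothesis (3).

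By (1), $\mathrm{Supp}_{\W_F}(\Phi')=\mathrm{Supp}_{\W_F}(\V(\pi))=\bigoplus_{k=0}^{o(\Psi)-1}\nu^k\Psi$. Applying Theorem \ref{theorem classification} to decompose $\Phi'$ into indecomposables of the form $[0,r-1]\otimes_{\ss}\Theta$ with $\Theta\in[\Irr_{\WD,\ss}(\W_F)]$, the support constraint forces each irreducible $\Theta$ to have $\W_F$-support inside the line $\Z_\Psi$, and the supports of the summands jointly to cover each $\nu^k\Psi$ with multiplicity one. Since $\Cyc(\Z_\Psi)$ already carries this full support with multiplicity one, we conclude that either $\Phi'=\Cyc(\Z_\Psi)$---the desired outcome---or $\Phi'$ is a direct sum of genuine segments of the form $[k_i,k_i+r_i-1]\otimes_{\ss}\Psi$ whose supports partition the line.

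To rule out the second alternative, I take the dual. Under (3), combined with Proposition \ref{theorem L functions} (which gives $\La(X,\pi,\pi^\vee)=1$ since $\pi$ is non-banal), one must have $\La(X,\Phi'\otimes_{\ss}(\Phi')^\vee)=1$. However, in the bad case $\Phi'\otimes_{\ss}(\Phi')^\vee$ contains, for each index $i$, the diagonal summand $([k_i,k_i+r_i-1]\otimes_{\ss}\Psi)\otimes_{\ss}([-(k_i+r_i-1),-k_i]\otimes_{\ss}\Psi^\vee)$, whose $\La$-factor has a pole at $X=0$ by Lemma \ref{lemma pole at zero}. Since $\La$-factors are reciprocals of polynomials normalized to $1$ at the origin, multiplying over direct summands cannot cancel this pole, so $\La(X,\Phi'\otimes_{\ss}(\Phi')^\vee)\neq 1$, a contradiction. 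Thus only the first alternative survives, which is what we want.

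The main obstacle I anticipate is the combinatorial bookkeeping in the support-based enumeration of possibilities for $\Phi'$---specifically, cleanly ruling out any mixture of cycle and segment indecomposable summands using the observation that $\Cyc(\Z_\Psi)$ alone exhausts the available support on its line. A lesser point is that condition (2) does not seem to enter the proof directly at this stage; it presumably plays its role when propagating this identity to more general nilpotent parameters. Finally, I should verify that dualizing commutes with the segment notation in the sense $([a,b]\otimes_{\ss}\Psi)^\vee=[-b,-a]\otimes_{\ss}\Psi^\vee$, so that Lemma \ref{lemma pole at zero} applies verbatim to each diagonal summand.
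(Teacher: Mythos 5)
Your proof is correct and follows essentially the same route as the paper: determine the $\W_F$-support of $\CV'(\St_0(\Z_\Psi))$ from (1), invoke the classification to see the only alternative to $\Cyc(\Z_\Psi)$ is a direct sum of segments on the line $\Z_\Psi$, and rule that out by producing an uncancellable pole at $X=0$ in $\La(X,\C'(\pi),\C'(\pi)^\vee)$ via Lemma \ref{lemma pole at zero}, contradicting Proposition \ref{theorem L functions}. Your side remark that hypothesis (2) does not seem to be used is a fair observation: the paper does invoke (2) at the step computing $\CV'(\St_0(\Z_\Psi))^\vee=([-b,-a]\otimes_{\ss}\Psi^\vee)\oplus W^\vee$, but as you note this is just the compatibility of duality with $\oplus$ and the formula $([a,b]\otimes_{\ss}\Psi)^\vee=[-b,-a]\otimes_{\ss}\Psi^\vee$, which is a property of the dual on Deligne representations and does not require knowing that $\C'$ commutes with duals; condition (2) is genuinely used only in the subsequent Theorem \ref{theorem characterization}.
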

\begin{proof}
Thanks to (\ref{a}),~$\CV'(\St_0(\Z_\Psi))$ has $\W_F$-support 
$\bigoplus_{k=0}^{o(\Psi)-1}\nu^k\Psi$. Hence, by Theorem \ref{theorem classification}, its image under $\CV'$ is either $\Cyc(\Z_\Psi)$ or a sum of Deligne representations of the 
form $[a,b]\otimes_{\ss} \Psi$ for $0\leq a \leq b \leq o(\Psi)-1$. If we are in the second situation, writing~$\CV'(\St_0(\Z_\Psi))=([a,b]\otimes_{\ss} \Psi)\oplus W$, we 
have $\CV'(\St_0(\Z_\Psi))^\vee=([-b,-a]\otimes_{\ss} \Psi^\vee)\oplus W^\vee$, thanks to~(\ref{b}). However, writing $\tau$ for the non-banal cuspidal representation $\V^{-1}(\St_0(\Z_\Psi))$, we have~$\La(X,\tau,\tau^\vee)=1$ according to Theorem \ref{theorem L functions} and Proposition \ref{proposition non banal cusp}, whereas 
\begin{align*}\La(X,\C(\tau),\C(\tau^\vee))&=\La(X,(([a,b]\otimes_{\ss} \Psi)\oplus W) \otimes_{\ss} (([-b,-a]\otimes_{\ss}\Psi^\vee)\oplus W^\vee))\\
&= \La(X,([a,b]\otimes_{\ss} \Psi)\otimes_{\ss} ([-b,-a]\otimes_{\ss}\Psi^\vee)) \La'(X)\end{align*} for $\La'(X)$ an Euler factor. Now, observe that 
$\La(X,([a,b]\otimes_{\ss} \Psi) \otimes_{\ss} ([-b,-a]\otimes_{\ss}\Psi^\vee))$ 
has a pole at $X=0$ according to Lemma \ref{lemma pole at zero}, hence cannot be equal to $1$. The conclusion of this discussion, according to (\ref{c}) 
is~$\CV'(\St_0(\Z_\Psi))=\Cyc(\Z_\Psi)$.
\end{proof}

It follows that (\ref{a})-(\ref{c}) characterize~$\C\vert_{\Irr_{\cusp}^*(\GL(F))}$ without reference to Vign\'eras' correspondence~$\V$.  

On the other hand any map $\CV'$ satisfying~(\ref{a})-(\ref{c}) must send each $\nu^k\Psi$ to itself if $o(\Psi)>1$ by~(\ref{a}).  So there is no 
chance that $\CV'$ will preserve direct sums because~$\CV'(\bigoplus_{k=0}^{o(\Psi)-1}\nu^k\Psi)\neq \Cyc(\Z_\Psi)$.  In particular any compatibility property 
of~$\CV'$ with direct sums will have to be non-naive.  Here is our characterization of the map $\CV$:

\begin{thm}\label{theorem characterization}
Suppose $\CV':\Nilp_{\WD,\ss}(\W_F)\rightarrow [\Rep_{\WD,\ss}(\W_F)]$ satisfies (\ref{a})-(\ref{c}) of Proposition \ref{prop observation 1}, and suppose moreover
\begin{enumerate}[(A)]
\item \label{d} If~$\Phi'\in\Image(\CV')$ and~$\Phi'=\Phi_1'\oplus\Phi_2'$ in~$[\Rep_{\WD,\ss}(\W_F)]$ then~$\Phi'_1,\Phi'_2\in\Image(\CV')$.
 Moreover, if~$\Phi'=\CV'(\Phi)$,~$\Phi'_i=\CV'(\Phi_i)$ for~$\Phi, \Phi_i\in \Nilp_{\WD,\ss}(\W_F)$, and~$\Phi'=\Phi'_1\oplus\Phi'_2$, then~$\Phi=\Phi_1\oplus \Phi_2$.
\item \label{e} $\CV'([0,j-1]\otimes_{\ss} \Phi)=[0,j-1]\otimes_{\ss}\CV'(\Phi)$ for $j\in \N_{\geq 1}$ and $\Phi\in \Nilp_{\WD,\ss}(\W_F)$.
\end{enumerate}
Then $\CV'=\CV$.
\end{thm}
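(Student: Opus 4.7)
The plan is to use the provided axioms to pin down $\CV'$ first on indecomposable nilpotents and cyclic chunks, and then to leverage (\ref{d}) together with the resulting injectivity of $\CV'$ to propagate the equality to all of $\Nilp_{\WD,\ss}(\W_F)$.

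First, I would show that $\CV'$ is injective: if $\CV'(\Phi)=\CV'(\Phi')$, then applying (\ref{d}) to the decomposition $\CV'(\Phi)=\CV'(\Phi')\oplus\CV'(0)$ (where $\CV'(0)=0$ by (\ref{a})) yields $\Phi=\Phi'\oplus 0=\Phi'$. Next, I would compute $\CV'$ on basic building blocks. For $\Psi\in\Irr(\W_F)$ with $o(\Psi)>1$, axiom (\ref{a}) forces $\CV'(\Psi)=\Psi$, since the only equivalence class in $[\Rep_{\WD,\ss}(\W_F)]$ with $\W_F$-support $\Psi$ is $\Psi$ itself; for $o(\Psi)=1$, Proposition~\ref{prop observation 1} gives $\CV'(\Psi)=\Cyc(\Z_\Psi)$. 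In both cases $\CV'(\Psi)=\CV(\Psi)$, and (\ref{e}) then yields $\CV'([0,r-1]\otimes_{\ss}\Psi)=[0,r-1]\otimes_{\ss}\CV'(\Psi)$ for all $r\geq 1$, matching $\CV$. Applying (\ref{e}) to $\Phi=\St_0(\Z_\Psi)$ and using Proposition~\ref{prop observation 1}, I would also obtain $\CV'([0,k-1]\otimes_{\ss}\St_0(\Z_\Psi))=[0,k-1]\otimes_{\ss}\Cyc(\Z_\Psi)$ for all $k\geq 1$.

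The key lemma I would then prove is: for $o(\Psi)>1$, the representation $[0,k-1]\otimes_{\ss}\St_0(\Z_\Psi)$ does not lie in $\Image(\CV')$. Indeed, if $\CV'(\Omega)=[0,k-1]\otimes_{\ss}\St_0(\Z_\Psi)=\bigoplus_{l=0}^{o(\Psi)-1}[0,k-1]\otimes_{\ss}\nu^l\Psi$, then iterating (\ref{d}) and using injectivity identifies each summand's preimage with itself, forcing $\Omega=[0,k-1]\otimes_{\ss}\St_0(\Z_\Psi)$; this contradicts the earlier formula $\CV'([0,k-1]\otimes_{\ss}\St_0(\Z_\Psi))=[0,k-1]\otimes_{\ss}\Cyc(\Z_\Psi)$, which for $o(\Psi)>1$ is indecomposable and hence distinct from $[0,k-1]\otimes_{\ss}\St_0(\Z_\Psi)$.

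Finally, for an arbitrary $\Phi\in\Nilp_{\WD,\ss}(\W_F)$, I would decompose $\CV'(\Phi)=\bigoplus_j\Xi_j$ into indecomposable summands in $[\Rep_{\WD,\ss}(\W_F)]$; repeatedly applying (\ref{d}) and invoking injectivity, each $\Xi_j=\CV'(\Omega_j)$ uniquely and $\Phi=\bigoplus_j\Omega_j$. Theorem~\ref{theorem classification}(2) together with the above computations forces each $\Omega_j$ to be either a single indecomposable segment $[0,k-1]\otimes_{\ss}\nu^l\Psi$ (with $o(\Psi)>1$) or a cyclic chunk $[0,k-1]\otimes_{\ss}\St_0(\Z_\Psi)$. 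For a fixed pair $(\Z_\Psi,k)$ with $o(\Psi)>1$ (the case $o(\Psi)=1$ being trivial, as each indecomposable summand of $\Phi$ then already coincides with a cyclic chunk), writing $m_{\Z_\Psi,k,l}$ for the multiplicity of $[0,k-1]\otimes_{\ss}\nu^l\Psi$ in $\Phi$, $a_{\Z_\Psi,k}$ for the number of cyclic-chunk $\Omega_j$'s of this shape, and $b_{\Z_\Psi,k,l}$ for the number of single-segment $\Omega_j$'s equal to $[0,k-1]\otimes_{\ss}\nu^l\Psi$, the identity $a_{\Z_\Psi,k}+b_{\Z_\Psi,k,l}=m_{\Z_\Psi,k,l}$ yields $a_{\Z_\Psi,k}\leq n_{\Z_\Psi,k}:=\min_l m_{\Z_\Psi,k,l}$. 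If the inequality were strict, then every $b_{\Z_\Psi,k,l}\geq n_{\Z_\Psi,k}-a_{\Z_\Psi,k}>0$, and $\CV'(\Phi)$ would admit $[0,k-1]\otimes_{\ss}\St_0(\Z_\Psi)$ as a direct summand, contradicting the key lemma via (\ref{d}). Hence $a_{\Z_\Psi,k}=n_{\Z_\Psi,k}$ for every pair, matching the canonical decomposition and giving $\CV'(\Phi)=\CV(\Phi)$. The main obstacle I expect is precisely this last combinatorial step: ensuring the multiplicity bookkeeping works uniformly across all lines and lengths, and that the key lemma really rules out any ``partial'' cyclic grouping.
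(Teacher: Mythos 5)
Your overall strategy is correct and close to the paper's: you compute $\CV'$ on indecomposables via (\ref{a}), Proposition~\ref{prop observation 1} and (\ref{e}), then rule out cyclic chunks $[0,k-1]\otimes_{\ss}\St_0(\Z_\Psi)$ from $\Image(\CV')$, and finish with a bookkeeping step via (\ref{d}). Your observation that (\ref{d}) gives injectivity of $\CV'$ is a nice streamlining, and your key lemma argument for $o(\Psi)>1$ is a genuine simplification over the paper's: by decomposing $[0,k-1]\otimes_{\ss}\St_0(\Z_\Psi)$ into its $o(\Psi)$ indecomposable summands, applying (\ref{d}) iteratively, and using injectivity, you force the preimage to be $[0,k-1]\otimes_{\ss}\St_0(\Z_\Psi)$ itself and contradict the known value $\CV'([0,k-1]\otimes_{\ss}\St_0(\Z_\Psi))=[0,k-1]\otimes_{\ss}\Cyc(\Z_\Psi)$ — no need for Lemma~\ref{lemma tensor product decomposition} here.

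However, there is a genuine gap at $o(\Psi)=1$, which you dismiss as ``trivial.'' In that case $[0,k-1]\otimes_{\ss}\St_0(\Z_\Psi)=[0,k-1]\otimes_{\ss}\Psi$ is a \emph{single indecomposable}, so your decompose-and-apply-(\ref{d}) argument has nothing to grab onto: there is no nontrivial direct-sum decomposition of the target to invoke (\ref{d}) on. Your parenthetical remark — that ``each indecomposable summand of $\Phi$ then already coincides with a cyclic chunk'' — is true but beside the point: it implicitly presumes an additivity of $\CV'$ that is not among your axioms, and it does not show that $[0,k-1]\otimes_{\ss}\Psi\notin\Image(\CV')$ for $k\geq 2$. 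Without ruling this out, your final classification of the possible $\Omega_j$ into ``indecomposable segment with $o(\Psi)>1$'' or ``cyclic chunk'' omits a third possibility: that some $\Xi_j=[0,k-1]\otimes_{\ss}\Psi$ with $o(\Psi)=1$ lies in the image with a decomposable nilpotent preimage. The paper closes this hole by combining (\ref{e}) with Lemma~\ref{lemma tensor product decomposition}: if $[0,j-1]\otimes_{\ss}\Psi\in\Image(\CV')$, then so is $[0,j-1]\otimes_{\ss}[0,j-1]\otimes_{\ss}\Psi$, which decomposes into segments including $[j-1,j-1]\otimes_{\ss}\Psi=\Psi$; by (\ref{d}) this forces $\Psi\in\Image(\CV')$, contradicting $\CV'(\Psi)=\Cyc(\Z_\Psi)$. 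You should incorporate this tensor-square trick to complete the $o(\Psi)=1$ case.
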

\begin{proof}
For $\Psi\in \Irr(\W_F)$, it follows at once from Proposition \ref{prop observation 1} and (\ref{e}) that 
\begin{align*}
\CV'([0,j-1]\otimes_{\ss} \Psi)&=[0,j-1]\otimes_{\ss} \Psi,\quad\text{ if $o(\Psi)>1$ and}\\
\CV'([0,j-1]\otimes_{\ss} \St_0(\Z_\Psi))&=[0,j-1]\otimes_{\ss} \Cyc(\Z_\Psi).\end{align*}

Next we prove that $\Image(\CV')\subset \C_{\WD,\ss}(\W_F)$. By (\ref{d}), an element of $\Image(\CV')$ can be decomposed as a direct 
sum of elements in $\Image(\CV')\cap [\Indec_{\WD,\ss}]$, and (\ref{d}) reduces the proof of the 
inclusion $\Image(\CV')\subset \C_{\WD,\ss}(\W_F)$ to showing that $[0,j-1]\otimes_{\ss}\St_0(\Psi)\notin \Image(\CV')$ for $\Psi\in \Irr(\W_F)$, $j\geq 1$.

We first assume that $o(\Psi)=1$, so $\St_0(\Psi)=\Psi$. The only possible pre-image of $\Psi$ by $\CV'$ is $\Psi$ by (\ref{a}), however $\CV'(\Psi)=\Cyc(\Z_\Psi)$ by 
Proposition \ref{prop observation 1} so $\St_0(\Psi)\notin \Image(\CV')$.  Now suppose $[0,j-1]\otimes_{\ss} \Psi \in \Image(\CV')$ for $j\geq 2$, then by (\ref{e}) this would imply that 
$[0,j-1]\otimes_{\ss} [0,j-1]\otimes_{\ss} \Psi \in \Image(\CV')$, hence that 
\[[0,j-1]\otimes_{\ss} [0,j-1]\otimes_{\ss} \Psi= [0,2j-2]\otimes_{\ss} \Psi\oplus\dots \oplus [j-1,j-1]\otimes_{\ss} \Psi \]
also belongs to $\Image(\CV')$ thanks to Lemma \ref{lemma tensor product decomposition}. However as $o(\Psi)=1$, the Deligne 
representation $[j-1,j-1]\otimes_{\ss} \Psi$ is nothing else than $\Psi$, which does not belong to $\Image(\CV')$, contradicting (\ref{d}).

 If 
$o(\Psi)>1$, then $\CV'(\nu^k\Psi)=\nu^k\Psi$. If $\St_0(\Psi)$ belonged to $\Image(\CV')$ then (\ref{d}) would imply 
that~$\St_0(\Psi)=\CV'(\bigoplus_{k=0}^{o(\Psi)-1} \nu^k \Psi)$, which is not the case thanks to Proposition \ref{prop observation 1}. To see 
that $[0,j-1]\otimes_{\ss}\St_0(\Psi)\notin \Image(\CV')$ for all $j\geq 2$ we use the same trick as in the 
$o(\Psi)=1$ case.

Now take $\Phi\in \Nilp_{\WD,\ss}$, as we just noticed $\CV'(\Phi)$ is a $\C$-parameter and we write it 
\begin{align*}
\CV'(\Phi)&= \CV'(\Phi)_{\text{acyc}}\oplus\bigoplus_{k\geq 1, \Z_\Psi\in \li(\W_F)} [0,k-1]\otimes_{\ss} n_{\Z_\Psi,k} \Cyc(\Z_{\Psi})
 \end{align*}
as in Section \ref{subsection CV}, where for each irreducible line~$\mathbb{Z}_\Psi$ we have fixed an irreducible~$\Psi\in\mathbb{Z}_{\Psi}$.  
Then (\ref{d}) and the beginning of the proof imply that 
\[%\Phi=\Phi_{\text{acyc}}\oplus\bigoplus_{k\geq 1, \Z_\Psi\in \li(\W_F)} [0,k-1]\otimes_{\ss} n_{\Z_\Psi,k} \St_0(\Z_{\Psi}),
\Phi=\CV'(\Phi)_{\text{acyc}}\bigoplus_{k\geq 1, \Z_\Psi\in \li(\W_F)} [0,k-1]\otimes_{\ss} 
 n_{\Z_\Psi,k} \St_0(\Z_{\Psi}) ,\] hence that 
$\CV'(\Phi)=\CV(\Phi)$.
\end{proof}

\section{The semiring structure on the space of $\C$-parameters}\label{section semiring}
As~$(\Nilp_{\WD,\ss}(\W_F),\oplus,\otimes_{\ss})$ is a semiring, the map $\CV$ endows $\C_{\WD,\ss}(\W_F)$ with a semiring structure by transport of structure. We show that this semiring structure on $\C_{\WD,\ss}(\W_F)$ can be obtained without referring to $\CV$ directly, thus shedding a slightly different light on the map $\CV$.

We denote by $\G(\Rep_{\WD,\ss}(\W_F))$ the Grothendieck group of the monoid $([\Rep_{\WD,\ss}(\W_F)],\oplus)$. 
We set \[\G_0(\Rep_{\WD,\ss}(\W_F))=\langle [0,k-1]\otimes_{\ss}\St_0(\Z_\Psi)-[0,k-1]\otimes_{\ss}\Cyc(\Z_\Psi)\rangle_{\Z_\Psi\in \li(W_F),\ k\in \N_{\geq 1}},\] 
the additive subgroup of~$\G(\Rep_{\WD,\ss}(\W_F))$ generated by the differences $[0,k-1]\otimes_{\ss}\St_0(\Z_\Psi)-[0,k-1]\otimes_{\ss}\Cyc(\Z_\Psi)$ for $\Z_\Psi\in \li(\W_F)$ and $k\in \N_{\geq 1}$.
 
\begin{prop}\label{proposition iso 1}
The canonical map~$h_{\C}:\C_{\WD,\ss}(\W_F)\rightarrow \G(\Rep_{\WD,\ss}(\W_F))/\G_0(\Rep_{\WD,\ss}(\W_F))$, obtained by composing the canonical projection~$h:\G([\Rep_{\WD,\ss}(\W_F)])\rightarrow \G(\Rep_{\WD,\ss}(\W_F))/\G_0(\Rep_{\WD,\ss}(\W_F))$ with the 
natural injection of $\C_{\WD,\ss}(\W_F)\hookrightarrow\G(\Rep_{\WD,\ss}(\W_F))$, is injective.  Moreover, its image is stable under the operation $\oplus$. In particular, this endows the set $\C_{\WD,\ss}(\W_F)$ with 
a natural monoid structure.
\end{prop}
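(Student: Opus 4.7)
The plan rests on the observation that, by Theorem~\ref{theorem classification}, the set $[\Indec_{\WD,\ss}(\W_F)]$ is a $\Z$-basis of $\G(\Rep_{\WD,\ss}(\W_F))$, where each indecomposable class has the form $[0,k-1]\otimes_{\ss}\Theta$ with $\Theta$ either a single irreducible $\Psi\in\Irr(\W_F)$ or a cycle representation $\Cyc(\Z_\Psi)$. Consequently $\G_0$ is literally the $\Z$-span of the vectors $[0,k-1]\otimes_{\ss}\St_0(\Z_\Psi)-[0,k-1]\otimes_{\ss}\Cyc(\Z_\Psi)$, and membership in $\G_0$ can be tested by reading off coefficients on this basis; moreover, expanding $\St_0(\Z_\Psi)=\bigoplus_j\nu^j\Psi$ shows that a generator of $\G_0$ contributes the same coefficient to each $[0,k-1]\otimes_{\ss}\nu^j\Psi$ as $j$ varies over a fixed line.

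For injectivity, I would pick $\Phi'_1,\Phi'_2\in\C_{\WD,\ss}(\W_F)$ with $\Phi'_1-\Phi'_2\in\G_0$ and write $\Phi'_i=\CV(\Phi_i)$ for uniquely determined $\Phi_i\in\Nilp_{\WD,\ss}(\W_F)$. Denote by $a_{i,\nu^j\Psi,k}$ the multiplicity of $[0,k-1]\otimes_{\ss}\nu^j\Psi$ in the acyclic part of $\Phi_i$ (which coincides with its multiplicity in $\Phi'_i$) and by $n_{i,\Z_\Psi,k}$ the multiplicity of $[0,k-1]\otimes_{\ss}\Cyc(\Z_\Psi)$ in $\Phi'_i$. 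The relation $\Phi'_1-\Phi'_2\in\G_0$ then yields integers $c_{\Z_\Psi,k}$ such that $a_{1,\nu^j\Psi,k}-a_{2,\nu^j\Psi,k}=c_{\Z_\Psi,k}$ for every $j\in\{0,\dots,o(\Psi)-1\}$ and $n_{1,\Z_\Psi,k}-n_{2,\Z_\Psi,k}=-c_{\Z_\Psi,k}$. The defining property of the acyclic part forces $\min_j a_{i,\nu^j\Psi,k}=0$ for both $i=1,2$; a short sign analysis on $c_{\Z_\Psi,k}$ (if it were positive, $a_{1,\nu^j\Psi,k}\geq c_{\Z_\Psi,k}>0$ for all $j$, contradicting acyclicity of $\Phi_1$, and symmetrically if negative) then forces $c_{\Z_\Psi,k}=0$ for all $\Z_\Psi$ and $k$, whence $\Phi'_1=\Phi'_2$.

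For the second claim, I would exploit that $\CV$ restricts to a bijection $\Nilp_{\WD,\ss}(\W_F)\to\C_{\WD,\ss}(\W_F)$ (it leaves the acyclic part untouched and swaps each cyclic summand $[0,k-1]\otimes_{\ss}\St_0(\Z_\Psi)$ for $[0,k-1]\otimes_{\ss}\Cyc(\Z_\Psi)$) and set, for $\Phi'_1,\Phi'_2\in\C_{\WD,\ss}(\W_F)$,
\[\Phi'_1\oplus_{\C}\Phi'_2:=\CV\bigl(\CV^{-1}(\Phi'_1)\oplus\CV^{-1}(\Phi'_2)\bigr)\in\C_{\WD,\ss}(\W_F).\]
It then suffices to show $\Phi'_1+\Phi'_2\equiv\Phi'_1\oplus_{\C}\Phi'_2\pmod{\G_0}$. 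Letting $\alpha_{\Z_\Psi,k}$, $\beta_{\Z_\Psi,k}$, $\gamma_{\Z_\Psi,k}$ denote the minima over $j$ of the multiplicities of $[0,k-1]\otimes_{\ss}\nu^j\Psi$ in $\CV^{-1}(\Phi'_1)$, $\CV^{-1}(\Phi'_2)$, and $\CV^{-1}(\Phi'_1)\oplus\CV^{-1}(\Phi'_2)$ respectively, expansion on the indecomposable basis yields the identity
\[\Phi'_1+\Phi'_2-\Phi'_1\oplus_{\C}\Phi'_2=\sum_{\Z_\Psi,k}(\gamma_{\Z_\Psi,k}-\alpha_{\Z_\Psi,k}-\beta_{\Z_\Psi,k})\bigl([0,k-1]\otimes_{\ss}\St_0(\Z_\Psi)-[0,k-1]\otimes_{\ss}\Cyc(\Z_\Psi)\bigr),\]
which lies in $\G_0$. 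Hence $h(\Phi'_1)+h(\Phi'_2)=h(\Phi'_1\oplus_{\C}\Phi'_2)$ in the quotient, so the image of $h_{\C}$ is stable under $+$, and transporting this addition back through the injection $h_{\C}$ endows $\C_{\WD,\ss}(\W_F)$ with a monoid structure (which clearly agrees with the one obtained via $\CV$).

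The main obstacle is the bookkeeping in the displayed identity: one must verify that the nonnegative integer $\gamma-\alpha-\beta$ (the extra cycles produced when merging $\CV^{-1}(\Phi'_1)$ and $\CV^{-1}(\Phi'_2)$) is exactly the coefficient reconciling the two expressions on the basis of indecomposables. Once this combinatorial accounting is set up, both statements follow from coefficient comparison in a free abelian group.
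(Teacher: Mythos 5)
Your proof is correct. The injectivity argument is essentially the paper's own: both proofs express membership of $\Phi'_1-\Phi'_2$ in $\G_0$ in terms of coefficients on the basis of indecomposables, use the defining property of the acyclic part (that for each line and each $k$ some coefficient vanishes), and run a sign analysis to force the $c_{\Z_\Psi,k}$ to be zero. The paper packages the sign analysis slightly differently — it splits the index set into $J^+$ and $J^-$, moves the negatively-signed terms to the other side so as to compare two genuine elements of the free commutative monoid $[\Rep_{\WD,\ss}(\W_F)]$, and then reads off the contradiction — but this is the same computation in different clothing.

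For stability under $\oplus$, however, you take a genuinely different route. The paper argues by induction on the number of indecomposable summands, adding one indecomposable $\Phi_0$ at a time and doing a case split on whether $\Phi_0$ ``completes a cycle'' (in which case it swaps the new cycle out for a $\Cyc(\Z_\Psi)$ summand) or not. You instead define $\Phi'_1\oplus_{\C}\Phi'_2:=\CV(\CV^{-1}(\Phi'_1)\oplus\CV^{-1}(\Phi'_2))$ and exhibit the difference $\Phi'_1+\Phi'_2-\Phi'_1\oplus_{\C}\Phi'_2$ directly as an explicit $\Z_{\geq 0}$-linear combination of the generators of $\G_0$, with coefficient $\gamma_{\Z_\Psi,k}-\alpha_{\Z_\Psi,k}-\beta_{\Z_\Psi,k}=\min_j(a^{(1)}_{\nu^j\Psi,k}+a^{(2)}_{\nu^j\Psi,k})$, the number of newly completed cycles. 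I checked the bookkeeping and the displayed identity holds. Your approach is arguably cleaner: it avoids induction, makes the resulting monoid law on $\C_{\WD,\ss}(\W_F)$ visibly the one transported via $\CV$ (a fact the paper needs later for Corollary \ref{corollary semiring isomorphism} but does not make explicit in this proof), and the nonnegativity of the coefficient $\gamma-\alpha-\beta$ is a pleasant by-product. The paper's inductive formulation buys a more algorithmic picture (repeatedly swap out completed cycles), which is perhaps easier to visualize but carries the same content.
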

\begin{proof}
Note that $h_{\C}$ is the restriction of the canonical surjection $h$ to $\C_{\WD,\ss}(\W_F)$.  Let~$\Phi,\Phi'$ be~$\C$-parameters, as in Section \ref{subsection CV} and the last proof, we write 
\begin{align*}
\Phi&=\bigoplus_{k\geq 1, \Z_\Psi\in \li(\W_F)} [0,k-1]\otimes_{\ss} 
\left( (\oplus_{i=0}^{o(\Z_\Psi)-1} m_{\Z_\Psi,k,i} \nu^i \Psi)
\oplus 
 n_{\Z_\Psi,k} \Cyc(\Z_{\Psi}) \right) \\  
\Phi'&=\bigoplus_{k\geq 1, \Z_\Psi\in \li(\W_F)} [0,k-1]\otimes_{\ss} 
\left( (\oplus_{i=0}^{o(\Z_\Psi)-1} m'_{\Z_\Psi,k,i} \nu^i \Psi)
\oplus
 n'_{\Z_\Psi,k} \Cyc(\Z_{\Psi}) \right) \end{align*} where for each $(\Z_{\Psi},k)$, there are $i,i'$ such that $m_{\Z_\Psi,k,i}=0$ and $m'_{\Z_\Psi,k,i'}=0$. 
Suppose that both $\Phi$ and $\Phi'$ have same the image under $h_{\C}$, then $\Phi'-\Phi\in \Ker(h)= \G_0(\Rep_{\WD,\ss}(\W_F))$. We thus get an equality of the form 
\[\Phi- \Phi'= \bigoplus_{k\geq 1, \Z_\Psi\in \li(\W_F)} a_{\Z_\Psi,k} ([0,k-1]\otimes_{\ss}\St_0(\Z_\Psi)-[0,k-1]\otimes_{\ss}\Cyc(\Z_\Psi)),\] where all sums are finite. Set $J^+$ to be the set of 
pairs $(\Z_\Psi,k)$ such that 
$a_{\Z_\Psi,k}\geq 0$ and  $J^-$ to be the set of pairs $(\Z_\Psi,k)$ such that 
$b_{\Z_\Psi,k}:=-a_{\Z_\Psi,k}>0$. We obtain 
\begin{align*}\Phi \oplus  \bigoplus_{(\Z_\Psi,k)\in J^-}& b_{\Z_\Psi,k}[0,k-1]\otimes_{\ss}\St_0(\Z_\Psi)\oplus \bigoplus_{(\Z_\Psi,k)\in J^+} a_{\Z_\psi,k}[0,k-1]\otimes_{\ss}\Cyc(\Z_\Psi) \\
&=\Phi'\oplus \bigoplus_{(\Z_\Psi,k)\in J^-} b_{\Z_\Psi,k}[0,k-1]\otimes_{\ss}\Cyc(\Z_\Psi)\oplus \bigoplus_{(\Z_\Psi,k)\in J^+} a_{\Z_\Psi,k}[0,k-1]\otimes_{\ss}\St_0(\Z_\Psi)\end{align*} in $[\Rep_{\WD,\ss}(\W_F)]$. Now take $(\Z_\Psi,k)\in J^+$, there is $i$ 
such that $m_{\Z_\Psi,k,i}=0$. Comparing the occurence of $[0,k-1]\otimes_{\ss}\nu^{i}\Psi$ on the left and right hand sides of the equality we obtain 
\[0=m'_{\Z_\Psi,k,i}+a_{\Z_\Psi,k}\Rightarrow a_{\Z_\Psi,k}=0.\]
Hence we just proved thet $a_{\Z_\Psi,k}=0$ for all $(\Z_\Psi,k)\in J^+$. The symmetric argument shows that for $(\Z_\Psi,k)\in J^-$, there is $i'$ such that \[m_{\Z_\psi,k,i'}+b_{\Z_\Psi,k}=0\Rightarrow b_{\Z_\Psi,k}=0,\] which is impossible by assumption.  Hence $J=J^+$ and $a_{\Z_{\Psi},k}=0$ for all $\Z_\Psi\in J$, which implies $\Phi=\Phi'$, so $h_{\C}$ is indeed injective. 

For the next assertion, suppose that~$h_{\C}(\oplus_{\Phi\in [\Indec_{\WD,\ss}(\W_F)]} n_{\Phi} \Phi)\in\Image(h_{\C})$. Take $\Phi_0\in [\Indec_{\WD,\ss}(\W_F)]$ and consider 
$h_{\C}(\oplus_{\Phi\in [\Indec_{\WD,\ss}(\W_F)]} n_{\Phi} \Phi)\oplus h_{\C}(\Phi_0)$. If $\Phi_0$ ``completes a cycle'' of 
$\oplus_{\Phi\in \Indec_{\WD,\ss}(\W_F)} n_{\Phi} \Phi$, i.e. if $\Phi_0=[0,k]\otimes_{\ss} \Psi$ with $\Psi$ an irreducible representation $\Psi$ of $\W_F$, and 
if all other elements of $[0,k]\otimes_{\ss} \Z_\Psi$ appear in $\oplus_{\Phi\in \Indec_{\WD,\ss}(\W_F)} n_{\Phi} \Phi$ as 
representations $[0,k]\otimes_{\ss} \nu^j \Psi$ with corresponding multiplicities $n_{[0,k]\otimes_{\ss} \nu^j \Psi}\geq 1$, then 
setting $I=\{[0,k]\otimes_{\ss} \nu^j \Psi, j= 1,\dots, o(\Psi)-1\},$ one gets 
\[h_{\C}(\oplus_{\Phi\in [\Indec_{\WD,\ss}(\W_F)]} n_{\Phi} \Phi)\oplus h_{\C}(\Phi_0)=
h_{\C}(\oplus_{\Phi\notin I} n_{\Phi} \Phi\oplus \oplus_{\Phi\in I} (n_{\Phi}-1) \Phi \oplus \Cyc(\Z_\Psi)).\]

If $\Phi_0$ does not complete a cycle, one has 
\[h_{\C}(\oplus_{\Phi\in [\Indec_{\WD,\ss}(\W_F)]} n_{\Phi} \Phi)\oplus h_{\C}(\Phi_0)=h_{\C}(\oplus_{\Phi\in \Indec_{\WD,\ss}(\W_F)} n_{\Phi} \Phi\oplus \Phi_0).\] The assertion follows by induction.
\end{proof}

In fact the tensor product operation descends on $\Image(h_{\C})$.

\begin{prop}\label{proposition iso 2}
The additive subgroup $\G_0(\Rep_{\WD,\ss}(\W_F))$ of the ring $\G(\Rep_{\WD,\ss}(\W_F))$ is in fact an ideal. Moreover $\Image(h_{\C})$ 
is stable under $\otimes_{\ss}$. In particular this endows $\C_{\WD,\ss}(\W_F)$ with 
a natural semiring structure, and $h_{\C}$ becomes a semiring isomorphism from $\C_{\WD,\ss}(\W_F)$ to 
$\Image(h_{\C})$.
\end{prop}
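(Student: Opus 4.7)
The crux is to show $\G_0(\Rep_{\WD,\ss}(\W_F))$ is a two-sided ideal; once established, $\otimes_{\ss}$ descends to the quotient ring, and stability of $\Image(h_{\C})$ under the induced operation follows from the fact that $\CV(\Phi)-\Phi\in\G_0$ for $\Phi\in\Nilp_{\WD,\ss}(\W_F)$ by construction of $\CV$, which gives $\Phi_1\otimes_{\ss}\Phi_2\equiv\CV(\Phi_1^0\otimes_{\ss}\Phi_2^0)\pmod{\G_0}$ for $\C$-parameters $\Phi_i=\CV(\Phi_i^0)$; the semiring isomorphism then combines this observation with Proposition \ref{proposition iso 1}.

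To prove the ideal property, by Theorem \ref{theorem classification} and bilinearity of $\otimes_{\ss}$ it suffices to show that each generator $g_{k,\Z_\Psi}:=[0,k-1]\otimes_{\ss}(\St_0(\Z_\Psi)-\Cyc(\Z_\Psi))$ of $\G_0$, multiplied by an arbitrary indecomposable class $[0,r-1]\otimes_{\ss}\Theta$, lies in $\G_0$. The plan is to first apply Lemma \ref{lemma tensor product decomposition} to $[0,k-1]\otimes_{\ss}[0,r-1]$ and then invoke the $\nu$-invariance of $\St_0(\Z_\Psi)$ and of the equivalence class $\Cyc(\Z_\Psi)$, thereby reducing the problem to showing that $[0,m-1]\otimes_{\ss}\Theta\otimes_{\ss}(\St_0(\Z_\Psi)-\Cyc(\Z_\Psi))\in\G_0$ for every $m\geq 1$ and every $\Theta\in[\Irr_{\WD,\ss}(\W_F)]$.

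Two sub-cases arise: (i) $\Theta=\Psi'$ is an irreducible Weil representation; (ii) $\Theta=\Cyc(\Z_{\Psi'})$. In case (i) I would decompose $\Psi\otimes\Psi'$ into $\W_F$-irreducibles and group summands by irreducible lines, verifying that both $\St_0(\Z_\Psi)\otimes_{\ss}\Psi'$ and $\Cyc(\Z_\Psi)\otimes_{\ss}\Psi'$ share the underlying semisimple $\W_F$-representation $\bigoplus_{\Z_\Lambda}n_{\Z_\Lambda}\St_0(\Z_\Lambda)$, and decompose as $\bigoplus_{\Z_\Lambda}n_{\Z_\Lambda}\St_0(\Z_\Lambda)$ and $\bigoplus_{\Z_\Lambda}n_{\Z_\Lambda}\Cyc(\Z_\Lambda)$ respectively, with identical multiplicities; the difference is then $\sum_{\Z_\Lambda}n_{\Z_\Lambda}g_{1,\Z_\Lambda}$, and tensoring with $[0,m-1]$ gives $\sum_{\Z_\Lambda}n_{\Z_\Lambda}g_{m,\Z_\Lambda}\in\G_0$. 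Case (ii) reduces to (i) by summing over $\Psi'\in\Z_{\Psi'}$ and writing $\Cyc(\Z_{\Psi'})=\St_0(\Z_{\Psi'})-g_{1,\Z_{\Psi'}}$; the residual task is to show $g_{1,\Z_\Psi}\otimes_{\ss}g_{1,\Z_{\Psi'}}\in\G_0$, which I would do by expanding into the four terms $\St_0\otimes\St_0$, $\St_0\otimes\Cyc$, $\Cyc\otimes\St_0$, $\Cyc\otimes\Cyc$ and analysing each by the same Weil-representation strategy.

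The main obstacle is identifying $\Cyc(\Z_\Psi)\otimes_{\ss}\Cyc(\Z_{\Psi'})$ explicitly as an equivalence class in $[\Rep_{\WD,\ss}(\W_F)]$, since it is the only subcomputation involving two genuinely non-nilpotent Deligne tensor factors; the semisimplification implicit in $\otimes_{\ss}$ must be handled honestly, and the resulting decomposition into $\Cyc(\Z_\Lambda)$ and $\St_0(\Z_\Lambda)$ pieces along the various lines has to be matched carefully against the $\St_0\otimes\St_0$ decomposition in order to certify that their difference is a $\Z$-combination of the generators $g_{1,\Z_\Lambda}$.
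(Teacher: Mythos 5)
Your overall plan is right, and your case~(i) argument is essentially the paper's: once one observes that $\St_0(\Z_\Psi)\otimes_{\ss}\Psi'$ and $\Cyc(\Z_\Psi)\otimes_{\ss}\Psi'$ are both $\nu$-invariant with the same $\W_F$-support, the first has zero Deligne operator and the second a bijective one, so they are $\bigoplus n_{\Z_\Lambda}\St_0(\Z_\Lambda)$ and $\bigoplus n_{\Z_\Lambda}\Cyc(\Z_\Lambda)$ with equal multiplicities, and the difference is a $\Z$-combination of generators. Your observation that stability of $\Image(h_{\C})$ under $\otimes_{\ss}$ follows from $\CV(\Phi)\equiv\Phi\pmod{\G_0}$ is also correct (and is more explicit than what the paper writes, which defers this to Proposition~\ref{proposition isomorphism of semirings}).

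The genuine gap is in case~(ii). You recognise that the bottleneck is $\Cyc(\Z_\Psi)\otimes_{\ss}\Cyc(\Z_{\Psi'})$, but your proposed route (expand $\Cyc=\St_0-g$, then carefully match the four expansions) leaves the hard computation open, and in fact your framing suggests you do not have the key structural fact that closes it. The point is this: the semisimple tensor product $\otimes_{\ss}$ of \cite[Section 4.4]{KM18} has the property that tensoring with a Deligne representation whose operator is \emph{bijective} yields a bijective operator, \emph{even if the other factor has zero operator}. Hence both $\St_0(\Z_{\Psi_0})\otimes_{\ss}\Cyc(\Z_{\Psi_1})$ and $\Cyc(\Z_{\Psi_0})\otimes_{\ss}\Cyc(\Z_{\Psi_1})$ have bijective Deligne operators; since each is $\nu$-invariant and they share the same $\W_F$-support, both decompose as the \emph{same} $\bigoplus c_{\Z_\Lambda}\Cyc(\Z_\Lambda)$, so $(\St_0(\Z_{\Psi_0})-\Cyc(\Z_{\Psi_0}))\otimes_{\ss}\Cyc(\Z_{\Psi_1})=0$, which trivially lies in $\G_0$. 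With that observation, the four-term expansion of $g_{1,\Z_\Psi}\otimes_{\ss}g_{1,\Z_{\Psi'}}$ you were worried about never needs to be carried out; and without it, the ``careful matching'' you describe cannot be made precise, because you have no control over the cyclic/acyclic type of the individual tensor summands. So the proof is incomplete at exactly the point you flagged, and the missing ingredient is a property of $\otimes_{\ss}$ rather than a combinatorial bookkeeping step.
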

\begin{proof}
For the first part, taking $\Psi_0\in \Irr(\W_F)$, it is enough to prove that for any $\Phi_1\in \Irr_{\WD,\ss}(\W_F)$ and $k,\ l\geq 0$, the tensor product 
$[0,k]\otimes_{\ss}(\St_0(\Z_{\Psi_0})-\Cyc(\Z_{\Psi_0}))\otimes_{\ss}[0,l]\otimes_{\ss}\Phi_1$ belongs to $\G_0(\Rep_{\WD,\ss}(\W_F))$. By associativity and commutativity of tensor product, and because 
$[0,i]\otimes_{\ss}[0,j]$ is always a sum of segments by Lemma \ref{lemma tensor product decomposition}, it is enough to check that $(\St_0(\Z_{\Psi_0})-\Cyc(\Z_{\Psi_0}))\otimes_{\ss}\Phi_1$ belongs to $\G_0(\Rep_{\WD,\ss}(\W_F))$. Suppose first that $\Phi_1$ is nilpotent, i.e. 
$\Phi_1=\Psi_1\in \Irr(\W_F)$. 
Because $\St_0(\Z_{\Psi_0})\otimes_{\ss}\Psi_1$ is fixed by $\nu$ under twisting and because its Deligne operator is zero, we get that 
\[\St_0(\Z_{\Psi_0})\otimes_{\ss}\Psi_1=\bigoplus_{\Z_\Psi\in \li(\W_F)} a_{\Z_\Psi}\St_0(\Z_\Psi).\] On the other hand because 
$\Cyc(\Z_{\Psi_0})\otimes_{\ss}\Psi_1$ is fixed by $\nu$ and because its Deligne operator is bijective we obtain 
\[\Cyc(\Z_{\Psi_0})\otimes_{\ss}\Psi_1=\bigoplus_{\Z_\Psi\in \li(\W_F)} b_{\Z_\Psi}\Cyc(\Z_\Psi).\] Now observing 
that both $\St_0(\Z_{\Psi_0})\otimes_{\ss}\Psi_1$ and $\Cyc(\Z_{\Psi_0})\otimes_{\ss}\Psi_1$ have the same $\Irr(\W_F)$-support, 
it implies that $a_{\Z_\Psi}=b_{\Z_\Psi}$ for all lines $\Z_\Psi$, form which we deduce that 
$(\St_0(\Z_{\Psi_0})-\Cyc(\Z_{\Psi_0}))\otimes_{\ss}\Phi_1\in \G_0(\Rep_{\WD,\ss}(\W_F))$. With the same arguments we obtain that 
$(\St_0(\Z_{\Psi_0})-\Cyc(\Z_{\Psi_0}))\otimes_{\ss}\Phi_1=0\in \G_0(\Rep_{\WD,\ss}(\W_F))$ when $\Phi_1$ is of the form 
$\Cyc(\Z_{\Psi_1})$ (because in this case both $\St_0(\Z_{\Psi_0})\otimes_{\ss}\Phi_1$ and 
$\Cyc(\Z_{\Psi_0})\otimes_{\ss}\Phi_1$ have bijective Deligne operators).
\end{proof}

The following proposition is proved in a similar, but simpler manner than the propositions above.

\begin{prop}\label{proposition isomorphism of semirings}
Let $h_{\Nilp}$ be the restriction of \[h:\G(\Rep_{\WD,\ss}(\W_F))\rightarrow \G(\Rep_{\WD,\ss}(\W_F))/\G_0(\Rep_{\WD,\ss}(\W_F))\] to $\Nilp_{\WD,\ss}(\W_F)$, then $h_{\Nilp}$ is a semiring isomorphism and $\Image(h_{\Nilp})=\Image(h_{\C})$.
\end{prop}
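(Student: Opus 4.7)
The proposition requires verifying that the restriction $h_\Nilp$ of the canonical projection $h$ to $\Nilp_{\WD,\ss}(\W_F)$ is injective, has image equal to $\Image(h_\C)$, and respects both semiring operations. Together with Propositions \ref{proposition iso 1} and \ref{proposition iso 2}, this will produce the claimed semiring isomorphism. The argument is a streamlined version of those two results, made simpler by the absence of cyclic indecomposable summands in nilpotent representations.

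For the image equality, I begin from the key identity that for $\Phi \in \Nilp_{\WD,\ss}(\W_F)$ with unique decomposition
$$\Phi = \Phi_\acyc \oplus \bigoplus_{k\geq 1,\,\Z_\Psi\in\li(\W_F)} n_{\Z_\Psi,k}\,[0,k-1]\otimes_{\ss}\St_0(\Z_\Psi)$$
as in Section \ref{subsection CV}, the difference $\Phi - \CV(\Phi)$ is by construction a $\Z$-linear combination of the generators $[0,k-1]\otimes_{\ss}\St_0(\Z_\Psi) - [0,k-1]\otimes_{\ss}\Cyc(\Z_\Psi)$ of $\G_0(\Rep_{\WD,\ss}(\W_F))$. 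Hence $h_\Nilp(\Phi) = h(\CV(\Phi)) = h_\C(\CV(\Phi))$, which gives $\Image(h_\Nilp) \subseteq \Image(h_\C)$; the reverse inclusion follows immediately because every $\C$-parameter is $\CV$ of some nilpotent representation.

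For injectivity, if $h_\Nilp(\Phi) = h_\Nilp(\Phi')$ with $\Phi, \Phi' \in \Nilp_{\WD,\ss}(\W_F)$, then $\Phi - \Phi' \in \G_0$ expands as a finite integer combination of the generators. Separating the indices into positive and negative parts $J^\pm$ and transposing as in the proof of Proposition \ref{proposition iso 1}, I obtain an equality in $[\Rep_{\WD,\ss}(\W_F)]$. Here the nilpotent hypothesis simplifies matters substantially: neither $\Phi$ nor $\Phi'$ contains an indecomposable summand of the form $[0,k-1]\otimes_{\ss}\Cyc(\Z_\Psi)$ (such a summand would contribute a non-nilpotent factor to the Deligne operator), so the $\Cyc$-summands on each side arise exclusively from the transposed generators. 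Comparing multiplicities of these indecomposables via Theorem \ref{theorem classification} forces every coefficient in $J^\pm$ to vanish, and $\Phi = \Phi'$.

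For the semiring property, Proposition \ref{proposition iso 2} already establishes that $\G_0$ is an ideal, so $h$ is a ring homomorphism; its restriction to the sub-semiring $(\Nilp_{\WD,\ss}(\W_F), \oplus, \otimes_{\ss})$ (which is a semiring as noted at the opening of Section \ref{section semiring}) therefore automatically preserves both operations. Combined with injectivity and the image equality, this yields the semiring isomorphism onto $\Image(h_\C)$. I do not anticipate a serious obstacle: the nilpotent setting removes the $\St_0/\Cyc$ interference that made the proof of Proposition \ref{proposition iso 1} delicate, and the semiring homomorphism property follows essentially for free once the ideal structure of $\G_0$ is in hand.
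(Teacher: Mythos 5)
Your proof is correct and is exactly the ``similar, but simpler'' argument the paper alludes to without writing out: the image equality follows from $\Phi-\CV(\Phi)\in\G_0$, the injectivity by transposing the positive and negative parts of an element of $\G_0$ as in Proposition \ref{proposition iso 1}, and the semiring property comes for free from Proposition \ref{proposition iso 2}. Your observation that the nilpotence of $\Phi,\Phi'$ lets you compare multiplicities of the $\Cyc$-type indecomposables directly (rather than hunting for a vanishing acyclic multiplicity as in Proposition \ref{proposition iso 1}) is precisely the simplification the authors had in mind; the argument implicitly uses uniqueness of the decomposition into indecomposables from Theorem \ref{theorem classification}, which the paper also uses tacitly in Proposition \ref{proposition iso 1}.
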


The above propositions have the following immediate corollary.

\begin{corollary}\label{corollary semiring isomorphism}
One has $\CV=h_{\C}^{-1}\circ h_{\Nilp}$, in particular it is a semiring isomorphism from 
$\Nilp_{\WD,\ss}(\W_F)$ to $\C_{\WD,\ss}(\W_F)$.
\end{corollary}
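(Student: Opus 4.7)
The plan is to establish the identity $\CV = h_{\C}^{-1}\circ h_{\Nilp}$ directly by computing both sides on an arbitrary nilpotent Deligne representation, and then to harvest the semiring statement as a formal consequence of Propositions~\ref{proposition iso 2} and~\ref{proposition isomorphism of semirings}.

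First, I would fix $\Phi\in\Nilp_{\WD,\ss}(\W_F)$ and write its canonical decomposition from Section~\ref{subsection CV},
\[\Phi=\Phi_{\acyc}\oplus\bigoplus_{k\geq 1,\,\Z_\Psi\in\li(\W_F)}[0,k-1]\otimes_{\ss} n_{\Z_\Psi,k}\St_0(\Z_\Psi),\]
and compare it with
\[\CV(\Phi)=\Phi_{\acyc}\oplus\bigoplus_{k\geq 1,\,\Z_\Psi\in\li(\W_F)}[0,k-1]\otimes_{\ss} n_{\Z_\Psi,k}\Cyc(\Z_\Psi).\]
The difference $\Phi-\CV(\Phi)$, computed in $\G(\Rep_{\WD,\ss}(\W_F))$, is a $\Z$-linear combination of the elements $[0,k-1]\otimes_{\ss}\St_0(\Z_\Psi)-[0,k-1]\otimes_{\ss}\Cyc(\Z_\Psi)$, hence lies in $\G_0(\Rep_{\WD,\ss}(\W_F))=\Ker(h)$ by the very definition of $\G_0$.

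Consequently $h(\Phi)=h(\CV(\Phi))$; reading the two sides through the identifications of Propositions~\ref{proposition iso 1} and~\ref{proposition isomorphism of semirings}, this translates into $h_{\Nilp}(\Phi)=h_{\C}(\CV(\Phi))$. Since $h_{\C}$ is injective by Proposition~\ref{proposition iso 1}, it admits an inverse on its image, and applying $h_{\C}^{-1}$ yields the desired identity $\CV(\Phi)=h_{\C}^{-1}(h_{\Nilp}(\Phi))$.

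Finally, the ``in particular'' part is immediate: by Proposition~\ref{proposition isomorphism of semirings}, $h_{\Nilp}$ is a semiring isomorphism from $\Nilp_{\WD,\ss}(\W_F)$ onto $\Image(h_{\Nilp})=\Image(h_{\C})$, and by Proposition~\ref{proposition iso 2}, $h_{\C}$ is a semiring isomorphism from $\C_{\WD,\ss}(\W_F)$ onto $\Image(h_{\C})$. Their composition $h_{\C}^{-1}\circ h_{\Nilp}$ is therefore a semiring isomorphism from $\Nilp_{\WD,\ss}(\W_F)$ to $\C_{\WD,\ss}(\W_F)$, and by the identity just proved, this is exactly $\CV$. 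There is no real obstacle here; the only subtlety worth flagging is that the proof relies crucially on the decomposition in Section~\ref{subsection CV}, which already encodes the generators of $\G_0$, so the kernel of $h$ matches perfectly the ``$\St_0\rightsquigarrow\Cyc$'' substitution defining $\CV$.
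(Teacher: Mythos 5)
Your argument is correct and is precisely the "immediate" verification the paper has in mind: the difference $\Phi - \CV(\Phi)$ lands in $\G_0(\Rep_{\WD,\ss}(\W_F)) = \Ker(h)$ by inspection of the decomposition in Section~\ref{subsection CV}, so $h_{\Nilp}(\Phi) = h_{\C}(\CV(\Phi))$, and injectivity of $h_{\C}$ together with $\Image(h_{\Nilp}) = \Image(h_{\C})$ gives the identity; the semiring statement then follows from Propositions~\ref{proposition iso 2} and~\ref{proposition isomorphism of semirings}. The paper leaves this unproved as immediate, and your write-up fills in exactly the intended steps.
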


\subsection*{Acknowledgements} We thank the Anglo-Franco-German Network in Representation Theory and its Applications: EPSRC Grant EP/R009279/1, the GDRI ``Representation Theory'' 2016-2020, and the LMS (Research in Pairs) for financial support.

\vspace{-0,1cm}
\bibliographystyle{plain}
\bibliography{Modlfactors}

\def\cprime{$'$}
\begin{thebibliography}{1}

\bibitem{HT01}
Michael Harris and Richard Taylor.
\newblock {\em The geometry and cohomology of some simple {S}himura varieties},
  volume 151 of {\em Annals of Mathematics Studies}.
\newblock Princeton University Press, Princeton, NJ, 2001.
\newblock With an appendix by Vladimir G. Berkovich.

\bibitem{HenniartLLC}
Guy Henniart.
\newblock Une preuve simple des conjectures de {L}anglands pour {${\rm GL}(n)$}
  sur un corps {$p$}-adique.
\newblock {\em Invent. Math.}, 139(2):439--455, 2000.

\bibitem{KM17}
Robert Kurinczuk and Nadir Matringe.
\newblock Rankin-{S}elberg local factors modulo {$\ell$}.
\newblock {\em Selecta Math. (N.S.)}, 23(1):767--811, 2017.

\bibitem{KM18}
Robert Kurinczuk and Nadir Matringe.
\newblock The~$\ell$-modular local {L}anglands correspondence and local
  constants.
\newblock to appear in J.~Inst.~Math.~Jussieu.

\bibitem{LRS}
G.~Laumon, M.~Rapoport, and U.~Stuhler.
\newblock $\mathcal{D}$-elliptic sheaves and the langlands correspondence.
\newblock {\em Invent. Math.}, 113(2):217--338, 1993.

\bibitem{Sch}
Peter Scholze.
\newblock The local langlands correspondence for gl$_n$ over {$p$}-adic fields.
\newblock {\em Invent. Math.}, 192(3):663--715, 2013.

\bibitem{Viginv}
Marie-France Vign{\'e}ras.
\newblock Correspondance de {L}anglands semi-simple pour {${\rm GL}(n,F)$}
  modulo {$ l\not= p$}.
\newblock {\em Invent. Math.}, 144(1):177--223, 2001.

\end{thebibliography}

\end{document}